\newtheorem{tw}{Theorem}[section]
\newtheorem{prop}[tw]{Proposition}
\newtheorem{cor}[tw]{Corollary}
\theoremstyle{definition}
\newtheorem{defi}[tw]{Definition}
\newtheorem{rem}[tw]{Remark}
\newtheorem{egg}[tw]{Example}}
\begin{document}
\allowdisplaybreaks

\newcommand{\arXivNumber}{1805.03066}

\renewcommand{\PaperNumber}{070}

\FirstPageHeading

\ShortArticleName{The Solution of Hilbert's Fifth Problem for Transitive Groupoids}

\ArticleName{The Solution of Hilbert's Fifth Problem \\ for Transitive Groupoids}

\Author{Pawe{\l} RA\'ZNY}

\AuthorNameForHeading{P.~Ra\'zny}

\Address{Institute of Mathematics, Faculty of Mathematics and Computer Science,\\ Jagiellonian University in Cracow, Poland}
\Email{\href{mailto:pawel.razny@student.uj.edu.pl}{pawel.razny@student.uj.edu.pl}}

\ArticleDates{Received May 11, 2018, in final form July 10, 2018; Published online July 17, 2018}

\Abstract{In the following paper we investigate the question: when is a transitive topo\-logical groupoid continuously isomorphic to a Lie groupoid? We present many results on the matter which may be considered generalizations of the Hilbert's fifth problem to this context. Most notably we present a ``solution'' to the problem for proper transitive groupoids and transitive groupoids with compact source fibers.}

\Keywords{Lie groupoids; topological groupoids}

\Classification{22A22}

\section{Introduction}
In this short paper we refine the generalization of Hilbert's fifth problem (which states that a locally Euclidean topological group is isomorphic as a topological group to a Lie group and was famously solved in~\cite{G} and~\cite{MZ}) to the case of transitive groupoids. The main results of our paper~\cite{H5oids} gave a partial solution to this problem via the following results:
\begin{tw}\label{strong}
Let $\mathcal{G}$ be a transitive topological groupoid with a smooth base $\mathcal{G}_0$, Tychonoff source fibers $\mathcal{G}_x$ and first countable space of morphisms $\mathcal{G}_1$ for which the topological groups $\mathcal{G}_x^x$ are locally Euclidean and the map $t_x$ is a~quotient map $($or equivalently has the sequence covering property$)$. Then $\mathcal{G}$ is continuously isomorphic to a unique Lie groupoid through a base preserving isomorphism.
\end{tw}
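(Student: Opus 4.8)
The plan is to reconstruct $\mathcal{G}$ as the gauge groupoid of a principal bundle and then transfer smoothness from the bundle to the groupoid. Fix a point $x \in \mathcal{G}_0$ and write $H := \mathcal{G}_x^x$ for the isotropy group. Right multiplication makes $\mathcal{G}_x$ a space with a free continuous $H$-action whose orbits are precisely the fibres of $t_x$, and continuity of the division map $(g,g') \mapsto g^{-1}g'$ identifies $\mathcal{G}_x \times H \cong \mathcal{G}_x \times_{\mathcal{G}_0} \mathcal{G}_x$, so $\mathcal{G}_x$ is a principal $H$-space. Since $t_x$ is a quotient map it is in fact an open surjection (the saturation $t_x^{-1}(t_x(U)) = \bigcup_{h \in H} U\cdot h$ of an open set is a union of translates, hence open), and since $\mathcal{G}_1$ — and therefore the subspace $\mathcal{G}_x$ — is first countable while $\mathcal{G}_x$ is Tychonoff, the hypotheses of a Gleason–Montgomery–Zippin type local cross-section theorem for free Lie group actions are in force: one obtains an open cover $\{U_\alpha\}$ of $\mathcal{G}_0$ together with continuous local sections $\sigma_\alpha\colon U_\alpha \to \mathcal{G}_x$ of $t_x$. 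Hence $t_x\colon \mathcal{G}_x \to \mathcal{G}_0$ is a topological principal $H$-bundle over the smooth manifold $\mathcal{G}_0$.

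Next I would invoke the classical solution of Hilbert's fifth problem: being a locally Euclidean topological group, $H$ carries a unique smooth structure making it a Lie group. The middle — and I expect principal — step is a smoothing argument: a topological principal $H$-bundle over a smooth (paracompact) manifold with $H$ a Lie group is continuously isomorphic to a smooth principal $H$-bundle $P \to \mathcal{G}_0$, and transporting the smooth structure of $P$ along such an isomorphism (a homeomorphism) endows $\mathcal{G}_x$ with a smooth structure inducing its given topology, for which $t_x$ is a smooth submersion and the $H$-action is smooth. This reduction to the classification of principal bundles — equivalently, to smooth approximation of the classifying map — is the technical heart; the rest is formal.

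It then remains to identify $\mathcal{G}$ with the gauge groupoid of $\mathcal{G}_x$. Transitivity and freeness of the diagonal $H$-action give a bijective, object-preserving, continuous groupoid morphism $(\mathcal{G}_x \times \mathcal{G}_x)/H \to \mathcal{G}_1$, $[g,g'] \mapsto g(g')^{-1}$, and the local sections $\sigma_\alpha$ supply a continuous inverse, so this map is a homeomorphism, hence an isomorphism of topological groupoids. The gauge groupoid of the smooth principal bundle $\mathcal{G}_x \to \mathcal{G}_0$ is a Lie groupoid, so pulling back its smooth structure exhibits $\mathcal{G}$ as continuously isomorphic, through a base-preserving isomorphism, to a Lie groupoid, which proves existence.

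For uniqueness, suppose $\Psi$ is a base-preserving continuous isomorphism between two Lie groupoids $\mathcal{H}$, $\mathcal{H}'$ each continuously isomorphic to $\mathcal{G}$. Being transitive, $\mathcal{H}$ and $\mathcal{H}'$ are the gauge groupoids of smooth principal bundles $Q$, $Q'$ over $\mathcal{G}_0$ with structure groups the isotropy Lie groups $\mathcal{H}^x_x$ and $(\mathcal{H}')^x_x$. The restriction of $\Psi$ to isotropy is a continuous isomorphism $\mathcal{H}^x_x \to (\mathcal{H}')^x_x$ of Lie groups, hence smooth by the classical theory, while the restriction $\Psi_x\colon Q \to Q'$ to source fibres is an equivariant homeomorphism over the identity; regarding $Q'$ as an $\mathcal{H}^x_x$-bundle via this group isomorphism, $Q$ and $Q'$ become topologically isomorphic smooth principal $\mathcal{H}^x_x$-bundles over $\mathcal{G}_0$, therefore smoothly isomorphic by the classification of principal bundles. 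The induced map on gauge groupoids is then an isomorphism of Lie groupoids, so the Lie structure is unique up to a base-preserving isomorphism. The main obstacle throughout is the smoothing step: passing from a merely continuous principal bundle (or bundle isomorphism) to a smooth one compatible with the given topology, for which one must lean on the classical classification theory of principal bundles with Lie structure group.
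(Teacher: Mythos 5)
Your proposal is correct and follows essentially the same route as the paper's (more precisely, the proof in~\cite{H5oids}, which the present paper only quotes): pass to the vertex Cartan principal bundle, apply the classical Hilbert fifth problem to $\mathcal{G}^x_x$, use Palais's local-triviality theorem for Cartan principal bundles with Tychonoff total space and Lie structure group, smooth the resulting topological principal bundle over the smooth base, and reconstruct $\mathcal{G}$ as the gauge groupoid, with uniqueness coming from the agreement of the smooth and topological classifications of principal bundles. The only point you leave as a black box --- smoothing a topological principal bundle with Lie structure group over a smooth manifold, compatibly with the given topology --- is indeed the same standard input the original argument relies on.
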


\begin{tw}\label{SE} Let $\mathcal{G}$ be a transitive topological groupoid with a smooth base $\mathcal{G}_0$ for which the spaces $\mathcal{G}_x^x$, $\mathcal{G}_1$ and $\mathcal{G}_x$ are topological manifolds and the map $t_x$ is a quotient map $($or equivalently has the sequence covering property$)$. Then $\mathcal{G}$ is continuously isomorphic to a unique Lie groupoid through a base preserving isomorphism.
\end{tw}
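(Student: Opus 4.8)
The plan is to deduce Theorem~\ref{SE} from Theorem~\ref{strong}. The two statements have the same conclusion, and almost the same hypotheses --- transitivity, a smooth base $\mathcal{G}_0$, and the requirement that $t_x$ be a quotient map (equivalently, have the sequence covering property) appear verbatim in both --- so it suffices to check that, under the assumptions of Theorem~\ref{SE}, the source fibers $\mathcal{G}_x$ are Tychonoff, the morphism space $\mathcal{G}_1$ is first countable, and each vertex group $\mathcal{G}_x^x$ is a locally Euclidean topological group.

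Each of these is automatic once $\mathcal{G}_x$, $\mathcal{G}_1$ and $\mathcal{G}_x^x$ are topological manifolds, which (as usual) we take to include the Hausdorff axiom. A topological manifold is locally compact and Hausdorff, hence completely regular, so $\mathcal{G}_x$ is Tychonoff. A topological manifold is first countable, because $\mathbb{R}^n$ is and first countability is a local property, so $\mathcal{G}_1$ is first countable. Finally $\mathcal{G}_x^x = s^{-1}(x)\cap t^{-1}(x)$, taken with the subspace topology from $\mathcal{G}_1$ and the restricted groupoid structure maps, is a topological group --- multiplication and inversion are continuous because they are restrictions of the continuous groupoid multiplication and inversion --- and it is locally Euclidean because it is a topological manifold. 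Hence Theorem~\ref{strong} applies and produces a unique Lie groupoid continuously isomorphic to $\mathcal{G}$ through a base preserving isomorphism, which is the assertion of Theorem~\ref{SE}.

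In particular there is effectively no new obstacle here: all of the work sits inside Theorem~\ref{strong}. There one first invokes the Gleason--Montgomery--Zippin solution of Hilbert's fifth problem~\cite{G,MZ} to upgrade each locally Euclidean topological group $\mathcal{G}_x^x$ to a Lie group, and then transports the smooth structures of $\mathcal{G}_x^x$ and of $\mathcal{G}_0$ to all of $\mathcal{G}$ through the gauge-groupoid description $\mathcal{G}\cong(\mathcal{G}_x\times\mathcal{G}_x)/\mathcal{G}_x^x$ of a transitive groupoid; the quotient-map hypothesis on $t_x$ is exactly what makes the principal $\mathcal{G}_x^x$-bundle $t_x\colon\mathcal{G}_x\to\mathcal{G}_0$ carry continuous local trivializations, so that this transfer is well defined and the comparison isomorphism is continuous. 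Were one to prove Theorem~\ref{SE} directly rather than through Theorem~\ref{strong}, the same strategy would apply, the only delicate point being that a continuous principal bundle with Lie structure group over a smooth manifold is continuously isomorphic to a smooth one --- which one obtains by smoothing the continuous transition cocycle relative to a good cover.
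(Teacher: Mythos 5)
Your deduction of Theorem~\ref{SE} from Theorem~\ref{strong} is correct: a topological manifold is locally compact Hausdorff (hence Tychonoff), first countable and locally Euclidean, so all hypotheses of Theorem~\ref{strong} are verified and the conclusion follows. This is essentially the paper's own route --- Theorem~\ref{SE} is imported from~\cite{H5oids}, and the present paper obtains its strengthening, Theorem~\ref{H5S}, from Theorem~\ref{strong} in exactly this way once the quotient-map hypothesis on $t_x$ is shown to be automatic.
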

Using these results we were able to solve the problem for proper transitive groupoids:
\begin{tw}\label{prop} Let $\mathcal{G}$ be a proper transitive topological groupoid with a smooth base~$\mathcal{G}_0$, for which the spaces $\mathcal{G}_x^x$, $\mathcal{G}_x$ and $\mathcal{G}_1$ are topological manifolds. Then $\mathcal{G}$ is continuously isomorphic to a unique Lie groupoid through a base preserving isomorphism.
\end{tw}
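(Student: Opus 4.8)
The plan is to obtain this as a corollary of Theorem \ref{SE}: among the hypotheses of that theorem the only one not already present in the statement here is that $t_x\colon\mathcal{G}_x\to\mathcal{G}_0$ be a quotient map (equivalently, have the sequence covering property), so the whole argument reduces to showing that properness of $\mathcal{G}$ forces this.

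First I would unwind the definition of properness: $\mathcal{G}$ proper means that the anchor $(s,t)\colon\mathcal{G}_1\to\mathcal{G}_0\times\mathcal{G}_0$ is a proper map, i.e., preimages of compact sets are compact. Fixing $x\in\mathcal{G}_0$, transitivity gives that $t_x$ is onto $\mathcal{G}_0$, and for any compact $K\subseteq\mathcal{G}_0$ one has $t_x^{-1}(K)=\mathcal{G}_x\cap t^{-1}(K)=(s,t)^{-1}(\{x\}\times K)$, which is compact since $\{x\}\times K$ is compact in $\mathcal{G}_0\times\mathcal{G}_0$. Hence $t_x$ is itself a proper continuous surjection. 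Now, since $\mathcal{G}_0$ is a smooth manifold, hence locally compact Hausdorff, a proper continuous map into $\mathcal{G}_0$ is closed: given a closed $C\subseteq\mathcal{G}_x$ and a point $y\in\overline{t_x(C)}$, choose a compact neighbourhood $N$ of $y$; then $C\cap t_x^{-1}(N)$ is compact, so $t_x\big(C\cap t_x^{-1}(N)\big)$ is compact, hence closed, and it contains $t_x(C)\cap\mathrm{int}\,N$, whose closure therefore contains $y$; thus $y\in t_x(C)$. A closed continuous surjection is a quotient map, so $t_x$ has the required property.

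With this in place, every hypothesis of Theorem \ref{SE} is met ($\mathcal{G}_0$ a smooth base; $\mathcal{G}_x^x$, $\mathcal{G}_1$, $\mathcal{G}_x$ topological manifolds; $t_x$ a quotient map), and applying it yields both the Lie groupoid structure and the unique base-preserving continuous isomorphism, which is exactly the conclusion. The only step that requires any care is the passage from properness of $(s,t)$ to the quotient property of $t_x$ — and even that is a short point-set argument using local compactness of $\mathcal{G}_0$ — so I do not expect a genuine obstacle here; the substance of the statement resides entirely in Theorem \ref{SE} (and, through it, Theorem \ref{strong}).
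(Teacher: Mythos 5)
Your proof is correct and follows the paper's intended route: this theorem is imported from~\cite{H5oids}, and both there and in the analogous corollary in this paper (the Gleason--Yamabe theorem for proper transitive groupoids) the argument is precisely to check that properness forces $t_x$ to be a quotient map and then invoke Theorem~\ref{SE}. The only cosmetic difference is that you show $t_x$ itself is proper, hence closed, hence a quotient map, whereas the paper observes that $(s,t)$ is closed and therefore a quotient map and then appeals to the last clause of Proposition~\ref{eq}; both are valid, and your version has the minor advantage of not needing the first countability of $\mathcal{G}_1$ required by Proposition~\ref{eq} (though that hypothesis is available here anyway since $\mathcal{G}_1$ is a manifold).
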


In this paper we provide the solution of the general case by dropping the assumption ``$t_x$ is a~quotient map'' in Theorem~\ref{SE} (as it turns out it follows from other assumptions in Theorem~\ref{SE}) and in doing so obtain Theorem~\ref{H5S} which is the main result of this paper. We also present some versions of the Gleason--Yamabe theorem for transitive groupoids. Throughout the paper we assume manifolds to be second-countable and Haudorff (in particular they are metrizable).

\section{Preliminaries}
\subsection{Some topology}
In order to make this short paper as self-contained as possible we recall some basic topological notions and properties which are used in subsequent sections. We begin with some well known properties of quotient maps (identifications). A continuous surjective map $p\colon X\rightarrow Y$ is a~\emph{quotient map} if a subset $U$ of $Y$ is open in $Y$ if and only if~$p^{-1}(U)$ is open in~$X$. Equivalently, $Y$~is the quotient space of~$X$ with respect to the relation~$\sim$ given by $x\sim y$ if and only if there exists a point $z\in Y$ such that $x,y\in p^{-1}(z)$. Quotient maps are characterized by the following universal property:
\begin{prop}\label{PtQ} Given a quotient map $p\colon X\rightarrow Y$ and another continuous map $f\colon X\rightarrow Z$ which is constant on the fibers of~$p$ there is a unique continuous map $\overline{f}\colon Y\rightarrow Z$ such that the following diagram commutes:
\begin{equation*}
\begin{tikzcd}
X \arrow[rd, "p"] \arrow[r, "f"] & Z \\
& Y.\arrow[u, "\overline{f}"]
\end{tikzcd}
\end{equation*}
\end{prop}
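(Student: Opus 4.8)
The plan is to construct $\overline{f}$ pointwise, verify it is the only possible choice, and then extract its continuity from the defining property of quotient maps.

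First I would define the map. Since $p$ is surjective, each $y\in Y$ has nonempty fiber $p^{-1}(y)$; choosing any $x\in p^{-1}(y)$, set $\overline{f}(y):=f(x)$. This is well defined precisely because $f$ is constant on the fibers of $p$: if $x,x'\in p^{-1}(y)$ then $f(x)=f(x')$. By construction $\overline{f}(p(x))=f(x)$ for every $x\in X$, i.e., $\overline{f}\circ p=f$, so the diagram commutes. Uniqueness is then immediate: if $g\colon Y\to Z$ also satisfies $g\circ p=f$, then for any $y\in Y$, picking $x\in p^{-1}(y)$ gives $g(y)=g(p(x))=f(x)=\overline{f}(y)$, hence $g=\overline{f}$.

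The only substantive point is continuity of $\overline{f}$, and this is exactly where the hypothesis that $p$ is a quotient map (not merely a continuous surjection) enters. Let $U\subseteq Z$ be open. Then
\[
p^{-1}\bigl(\overline{f}^{-1}(U)\bigr)=(\overline{f}\circ p)^{-1}(U)=f^{-1}(U),
\]
which is open in $X$ since $f$ is continuous. By the defining property of a quotient map, a subset $V\subseteq Y$ is open in $Y$ whenever $p^{-1}(V)$ is open in $X$; applying this with $V=\overline{f}^{-1}(U)$ shows $\overline{f}^{-1}(U)$ is open in $Y$. Hence $\overline{f}$ is continuous.

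I do not expect a genuine obstacle here: the statement is the standard universal property of the quotient topology, and the argument is a short diagram chase combined with a single use of the definition of a quotient map. The only places that call for a moment's attention are the well-definedness of $\overline{f}$ (which relies on surjectivity of $p$ together with constancy of $f$ on fibers) and deducing continuity in the correct direction, namely from ``$p^{-1}(V)$ open $\implies V$ open''.
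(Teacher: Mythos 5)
Your proof is correct and is the standard argument for the universal property of quotient maps; the paper itself states this proposition without proof as a recalled well-known fact, and your argument (pointwise definition via surjectivity and constancy on fibers, uniqueness from surjectivity, continuity from the defining property of quotient maps) is exactly the expected one.
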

We also recall the following result which will be useful in the proof of the main theorem:
\begin{tw}[{\cite[Corollary~5]{PPS}}]\label{MH} Let $f\colon M\rightarrow N$ be a continuous bijection between topological manifolds. Then $\dim(M)=\dim(N)$ and $f$ is a homeomorphism.
\end{tw}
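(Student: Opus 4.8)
The plan is to deduce both conclusions from Brouwer's invariance of domain together with a little dimension theory, working throughout inside small Euclidean charts. I will first establish $\dim M=\dim N$ and then the homeomorphism statement, the latter being an easy corollary of the former. (I tacitly assume $M$ and $N$ have a well-defined dimension, e.g.\ that they are connected; in general the same local reasoning yields $\dim_p M=\dim_{f(p)}N$ for every $p\in M$.)

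The engine for the dimension equality is the following remark: \emph{if there is a continuous injection $j$ of a closed $k$-ball $\bar B^k$ into a topological manifold $P$, then $k\le\dim P$.} Indeed, $\bar B^k$ is compact and $P$ is Hausdorff, so $j$ is a topological embedding; choosing a chart $W\cong\mathbb{R}^{\dim P}$ about some point of $j(\bar B^k)$, the set $j^{-1}(W)$ is a nonempty open subset of $\bar B^k$, so it meets the interior of $\bar B^k$ and contains a smaller closed $k$-ball $\bar B'$ with $j(\bar B')\subseteq W$. Thus $\bar B'$ embeds in $\mathbb{R}^{\dim P}$; since covering dimension is monotone on subspaces of separable metrizable spaces and $\dim\bar B'=k$, $\dim\mathbb{R}^{\dim P}=\dim P$, we conclude $k\le\dim P$. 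Applying this to $f$ restricted to a closed ball of dimension $m:=\dim M$ lying inside a chart of $M$ gives $m\le\dim N=:n$.

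For the reverse inequality $n\le m$ I would exploit $\sigma$-compactness. Since $M$ is second countable and locally compact, write $M=\bigcup_k K_k$ with each $K_k$ compact. Then $N=f(M)=\bigcup_k f(K_k)$, and because each $f|_{K_k}$ is a continuous bijection from a compact space onto a subspace of the Hausdorff space $N$, every $f(K_k)$ is closed in $N$ and $f|_{K_k}$ is a homeomorphism onto it. As a locally compact Hausdorff space, $N$ is a Baire space, so some $f(K_{k_0})$ has nonempty interior; that interior contains a chart $W_0\cong\mathbb{R}^n$, hence a closed $n$-ball $\bar B$. Then $(f|_{K_{k_0}})^{-1}$ restricted to $\bar B$, composed with the inclusion $K_{k_0}\hookrightarrow M$, is a continuous injection of $\bar B$ into $M$, so the remark above forces $n\le m$. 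Hence $m=n=:d$. (Alternatively one may invoke the countable sum theorem of dimension theory for $N=\bigcup_k f(K_k)$, using $\dim f(K_k)=\dim K_k\le m$.)

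Finally, to see that $f$ is a homeomorphism it suffices to show that $f$ is open. Fix $p\in M$, choose charts $V\cong\mathbb{R}^d$ about $p$ and $W\cong\mathbb{R}^d$ about $f(p)$, and shrink $V$ (using continuity of $f$) so that $f(V)\subseteq W$. Then $f|_V$ is a continuous injection $\mathbb{R}^d\to\mathbb{R}^d$, so by invariance of domain $f(V)$ is open in $W$, hence in $N$. Since such $V$ form a neighbourhood basis at every point of $M$, the map $f$ is open, $f^{-1}$ is continuous, and $f$ is a homeomorphism. I expect the dimension equality, and within it the bound $n\le m$, to be the real obstacle: it is not a local matter (continuous images can have larger dimension than their source, as space-filling curves illustrate; it is injectivity \emph{together with} the $\sigma$-compactness of $M$ that rules this out), whereas once $\dim M=\dim N$ is in hand invariance of domain dispatches the rest routinely.
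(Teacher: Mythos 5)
The paper does not actually prove this statement: it is imported wholesale as Corollary~5 of \cite{PPS}, so there is no internal proof to measure yours against. Judged on its own, your argument is correct and self-contained under the convention (standard, and clearly the one intended both in \cite{PPS} and in this paper) that a topological manifold has a single well-defined dimension. Both halves are sound: the closed-ball embedding remark gives $\dim M\le\dim N$; the $\sigma$-compactness-plus-Baire argument (or, as you note parenthetically, the countable closed sum theorem) gives the reverse inequality; and invariance of domain then makes $f$ open. It is worth observing that your parenthetical route to $n\le m$ is essentially the paper's own Theorem~\ref{dim}, which is proved there in greater generality (continuous bijections out of locally compact separable metric spaces) by exactly the device of Theorem~\ref{DCS}; so that half of your proof duplicates machinery the paper builds anyway, while your Baire-category variant is a genuine alternative that trades the sum theorem for local compactness of the target.

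One slip, though it does not affect the theorem as stated or as used: the aside that ``in general the same local reasoning yields $\dim_p M=\dim_{f(p)}N$ for every $p$'' is false once components of different dimensions are allowed, and so is the theorem itself in that generality. Map $\{\ast\}\sqcup(0,1)\sqcup(1,2)$ onto $\mathbb{R}$ by sending the isolated point to $0$ and the two intervals homeomorphically onto $(-\infty,0)$ and $(0,\infty)$: this is a continuous bijection of locally Euclidean, second-countable Hausdorff spaces which is not a homeomorphism and for which $\dim_\ast=0\neq 1=\dim_0\mathbb{R}$. The obstruction is exactly where you predicted the difficulty would lie: your Baire argument produces \emph{some} point of $N$ near which $f^{-1}$ is controlled, with no say over which one, so it cannot be localized at a prescribed $f(p)$. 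Your hypothesis that $M$ and $N$ have well-defined dimension is therefore necessary, not cosmetic; with it in place the proof is complete.
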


Furthermore, we note some facts about the dimension of a separable metric space from~\cite{E}.
\begin{defi} Let $X$ be a set and $\mathcal{U}$ a family of subsets of~$X$. By the order of the family~$\mathcal{U}$ we mean the largest integer~$n$ such that the family $\mathcal{U}$ contains $n+ 1$ sets with a non-empty intersection, if no such integer exists, we say that the family $\mathcal{U}$ has order $\infty$.
\end{defi}
\begin{defi} To a separable metric space $X$ one assigns the dimension of $X$, denoted by $\dim(X)$ defined by the following conditions:
\begin{enumerate}\itemsep=0pt
\item[1)] $\dim(X) \leq n$, where $n\in\mathbb{N}$ if every finite open cover of the space X has a finite open refinement of order $\leq n$,
\item[2)] $\dim(X)=n$ if $\dim(X) \leq n$ and it is not true that $\dim(X) \leq n- 1$,
\item[3)] $\dim(X)=\infty$ if $\dim(X)\geq n$ for any $n\in\mathbb{N}$
\end{enumerate}
We also put $\dim_x(X)=\inf\{\dim(U) \,|\, U \text{ neighbourhood of } x\}$.
\end{defi}
\begin{rem} The invariant defined above is called the covering dimension of~$X$. We recall that, in the separable metric case all three definitions of dimension (small inductive, large inductive and covering; see~\cite{E}) coincide by \cite[Theorem~1.7.7]{E}. Hence, we simply call this invariant the dimension of $X$ and restate the theorems from~\cite{E} in a manner suitable to this convention.
\end{rem}
\begin{tw}[{\cite[Theorem~1.1.2]{E}}] If $X$ is a separable metric space and $A\subset X$ is a subspace then $\dim(A)\leq \dim(X)$.
\end{tw}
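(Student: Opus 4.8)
The plan is to not argue with the covering dimension directly, but to pass to the small inductive dimension $\operatorname{ind}$, for which monotonicity under subspaces is essentially immediate, and then to transfer the conclusion back using the coincidence of all three dimensions on separable metric spaces recalled in the Remark above (namely \cite[Theorem~1.7.7]{E}). If $\dim(X)=\infty$ there is nothing to prove, so suppose $n:=\dim(X)<\infty$; then $\operatorname{ind}(X)=n$, and it suffices to show $\operatorname{ind}(A)\le n$, since then $\dim(A)=\operatorname{ind}(A)\le n=\dim(X)$. Note that every subspace of $X$ is again separable metric, so the coincidence theorem is available for each space we encounter.

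I would establish the auxiliary claim \emph{if $A\subseteq Y$ and $\operatorname{ind}(Y)\le k$ then $\operatorname{ind}(A)\le k$} by induction on $k\ge -1$. For $k=-1$ the space $Y$ is empty, hence so is $A$, and the claim holds. For the inductive step fix $x\in A$ and a neighbourhood $W$ of $x$ in $A$; write $W=U\cap A$ with $U$ open in $Y$. Since $\operatorname{ind}(Y)\le k$, choose an open set $V$ in $Y$ with $x\in V\subseteq U$ and $\operatorname{ind}(\operatorname{Bd}_Y V)\le k-1$. Then $V\cap A$ is open in $A$ with $x\in V\cap A\subseteq W$, and it remains to bound the dimension of its boundary $\operatorname{Bd}_A(V\cap A)$.

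The only non-clerical point is the inclusion $\operatorname{Bd}_A(V\cap A)\subseteq\operatorname{Bd}_Y(V)$. Indeed, the closure of $V\cap A$ in $A$ equals $\overline{V\cap A}^{Y}\cap A\subseteq\overline{V}^{Y}\cap A$, while $V\cap A$ is already open in $A$; subtracting, $\operatorname{Bd}_A(V\cap A)\subseteq(\overline{V}^{Y}\setminus V)\cap A\subseteq\operatorname{Bd}_Y(V)$. Since $\operatorname{ind}(\operatorname{Bd}_Y V)\le k-1$, the inductive hypothesis applied to the subspace $\operatorname{Bd}_A(V\cap A)\subseteq\operatorname{Bd}_Y V$ gives $\operatorname{ind}(\operatorname{Bd}_A(V\cap A))\le k-1$. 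As $x$ and $W$ were arbitrary, $\operatorname{ind}(A)\le k$, which closes the induction; taking $Y=X$, $k=n$, and combining with the coincidence theorem finishes the proof. The step carrying the actual content — such as it is — is the boundary inclusion above; everything else is the coincidence theorem plus bookkeeping. A direct argument with the covering dimension would be considerably more awkward (and monotonicity of the covering dimension fails for general normal spaces), so the metric hypothesis is genuinely used, precisely through that coincidence.
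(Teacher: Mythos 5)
Your proof is correct, and it is essentially the argument behind the cited result: the paper gives no proof of its own but quotes Engelking's Theorem~1.1.2, which is precisely the subspace theorem for the small inductive dimension $\operatorname{ind}$ (proved there by exactly your induction via the boundary inclusion $\operatorname{Bd}_A(V\cap A)\subseteq\operatorname{Bd}_Y(V)$), restated for $\dim$ via the coincidence theorem just as the paper's preceding Remark announces. The only clerical point is that in the inductive step you should take $W$ open in $A$ (or pass to its interior) before writing $W=U\cap A$; with that, everything checks out.
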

\begin{tw}[{\cite[Theorem~1.5.3]{E}\label{DCS}}] If a separable metric space $X$ can be represented as the union of a sequence $\{F_1, F_2,, \dots\}$ of closed subspaces such that $\dim{F_i}\leq n$ for $i\in\mathbb{N}$ then $\dim(X) \leq n$.
\end{tw}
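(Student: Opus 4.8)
\noindent\emph{Proof proposal.} The approach I would take stays entirely within the covering definition of dimension, refining a given cover ``one set $F_i$ at a time'' and then passing to a limit; the point where this goes beyond the easy (locally finite) sum theorem is in making the limit cover honest. Write $G_i := F_1 \cup \dots \cup F_i$, a closed subspace, so $G_1 \subseteq G_2 \subseteq \cdots$ and $\bigcup_i G_i = X$. Fix an arbitrary finite open cover $\mathcal{U}^{(0)} = \{U_1, \dots, U_k\}$ of $X$; we must produce a finite open refinement of order $\le n$.

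The engine is the following refinement lemma: \emph{given a finite open cover $\mathcal{V} = \{V_1, \dots, V_k\}$ of $X$ whose trace on the closed set $G_{i-1}$ has order $\le n$, there is a finite open cover $\mathcal{V}' = \{V_1', \dots, V_k'\}$ of $X$ with $\overline{V_j'} \subseteq V_j$ for each $j$, with $V_j' \cap G_{i-1} = V_j \cap G_{i-1}$, and whose trace on $F_i$ has order $\le n$.} To build $\mathcal{V}'$: since $F_i$ is (separable metric, hence) normal with $\dim F_i \le n$, the finite open cover $\{V_j \cap F_i\}_j$ of $F_i$ admits a finite open refinement of order $\le n$; merging its members according to which $V_j$ contains them gives a $k$-indexed open cover $\{W_j\}$ of $F_i$ with $W_j \subseteq V_j \cap F_i$ and still of order $\le n$ (a point in $n+2$ of the merged sets lies in $n+2$ of the unmerged ones), and this refinement can be taken \emph{rel} the closed subset $G_{i-1} \cap F_i$, where $\mathcal{V}$ is already of order $\le n$, so that $W_j$ agrees with $V_j$ on $G_{i-1} \cap F_i$. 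One then thickens each $W_j$ to an open subset of $X$ lying in $V_j$, adjoins $V_j \setminus F_i$ to retain a $k$-indexed cover of $X$, and shrinks slightly away from $F_i \cup G_{i-1}$ (by normality of $X$) to get the closure condition without disturbing the traces on $F_i$ or on $G_{i-1}$.

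Iterating the lemma from $\mathcal{U}^{(0)}$ (with $G_0 = \emptyset$), and using that an open refinement cannot raise the order of a trace, produces covers $\mathcal{U}^{(i)} = \{U_j^{(i)}\}_j$ with $\overline{U_j^{(i)}} \subseteq U_j^{(i-1)}$, with $U_j^{(i)} \cap G_{i-1} = U_j^{(i-1)} \cap G_{i-1}$, and whose trace on each of $F_1, \dots, F_i$ — hence on $G_i$ — has order $\le n$. Set $V_j := \bigcap_{i \ge 0} U_j^{(i)} = \bigcap_{i \ge 1} \overline{U_j^{(i)}}$, a closed subset of $U_j$. The stationarity clause is exactly what makes the intersection behave: any $x \in X$ lies in some $G_{i_0}$, and the trace of $\mathcal{U}^{(i)}$ on $G_{i_0}$ is the same for all $i \ge i_0$, so ``$x \in U_j^{(i)}$'' is independent of $i$; choosing $j$ with $x \in U_j^{(i_0)}$ we get $x \in V_j$, so $\{V_j\}$ covers $X$. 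If $n+2$ of the $V_j$ met at a point $x \in G_{i_0}$, then $n+2$ of the $U_j^{(i_0)}$ would meet on $G_{i_0}$, which is impossible; hence $\{V_j\}$ is a finite closed cover of $X$ of order $\le n$ refining $\mathcal{U}^{(0)}$. Finally I would swell $\{V_j\}$ to a finite open cover $\{W_j\}$ with $V_j \subseteq W_j \subseteq U_j$ and the same intersection pattern — the standard swelling lemma for finite closed covers of a normal space does not raise the order — so that $\{W_j\}$ is a finite open refinement of $\mathcal{U}^{(0)}$ of order $\le n$, proving $\dim X \le n$.

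The hard part is the ``rel $G_{i-1}$'' bookkeeping inside the refinement lemma: a careless iteration gives a decreasing tower of covers whose intersection can miss points, and the entire proof hinges on arranging that the covers become stationary on each $G_i$ while still being genuinely refined on the new piece $F_i$ and still satisfying the closure nesting that legitimizes taking the intersection. Everything else is routine: that refinements do not raise the order, the merging and thickening manipulations between $F_i$ and $X$, the normality-based shrinking and swelling lemmas, and the fact that it suffices to handle a single finite open cover. (An alternative is to induct on $n$, using the Eilenberg--Otto characterization of $\dim \le n$ by partitions together with a lemma producing a single partition $L$ between one of the given pairs with $\dim(L \cap F_i) \le n-1$ for all $i$, so that the inductive hypothesis — the $(n-1)$-case of this very theorem — is precisely what bounds $\dim L$; but that route risks circularity with the usual proof of the partition characterization, so I would keep to the covering argument above.)
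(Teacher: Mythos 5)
The paper does not prove this statement at all: it is quoted verbatim from Engelking's \emph{Dimension Theory} (where, in the separable metric setting the paper works in, the inductive and covering dimensions coincide, so the covering-dimension version you are proving is the relevant one). Your overall architecture --- iterate a one-set-at-a-time refinement lemma with nested closures $\overline{U_j^{(i)}}\subseteq U_j^{(i-1)}$, intersect to get a closed cover of order $\le n$, then swell --- is exactly the standard textbook proof of the countable sum theorem for $\dim$, and the merging, thickening and swelling steps you describe are all fine.

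However, the refinement lemma as you state it is unsatisfiable, so the step you yourself identify as the crux would fail. You demand both $\overline{V_j'}\subseteq V_j$ and $V_j'\cap G_{i-1}=V_j\cap G_{i-1}$. Take $X=\mathbb{R}$, $G_{i-1}=[0,1]$ and $V_j=(0,2)$: then $V_j'\supseteq V_j\cap G_{i-1}=(0,1]$ forces $0\in\overline{V_j'}$, while $0\notin V_j$, so no such $V_j'$ exists. Nothing in your iteration prevents this configuration from arising at stage $i\ge 2$ (it would require $\overline{U_j^{(i-1)}\cap G_{i-1}}\subseteq U_j^{(i-1)}$ for every $j$, which you do not arrange). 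The good news is that the stationarity clause is not needed at all: if the covers merely satisfy $\overline{U_j^{(i)}}\subseteq U_j^{(i-1)}$ and have trace of order $\le n$ on each $F_m$ with $m\le i$, then for any $x\in X$ each stage provides some index $j_i$ with $x\in U_{j_i}^{(i)}$, by pigeonhole some index $j$ recurs infinitely often, and monotonicity of the sets in $i$ then gives $x\in\bigcap_i U_j^{(i)}=V_j$. The order bound for $\{V_j\}$ follows as you argue, using the trace condition on the $F_{i_0}$ containing $x$. So: delete the ``rel $G_{i-1}$'' bookkeeping, keep everything else, and the proof is correct.
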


It is also well known that the dimension of a manifold is equal to its dimension as a separable metric space and hence no ambiguity can arise (this is actually a consequence of the previous two theorems and the fact that $\dim(\mathbb{R}^n)=n$; see \cite[Theorem~1.8.2]{E} for the latter).

In the final section of this paper we will be using the following important theorem:
\begin{tw}[Gleason--Yamabe theorem for compact groups] \label{GY}Let $G$ be a compact Hausdorff topological group. For every neighbourhood $U$ of the identity of $G$ there exists a compact normal subgroup $K$ contained in $U$ such that $G\slash K$ is isomorphic as a topological group to a Lie group.
\end{tw}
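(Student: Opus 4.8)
The plan is to deduce this from the Peter--Weyl theorem together with Cartan's closed subgroup theorem. First I would recall that, by the Peter--Weyl theorem, the continuous finite-dimensional unitary representations of a compact Hausdorff group $G$ separate points; equivalently, $\bigcap_{\rho}\ker\rho=\{e\}$, where $\rho$ ranges over all such representations. The observation that makes this usable is that the family of kernels $\{\ker\rho\}$ is directed downward by inclusion, since $\ker(\rho_1\oplus\rho_2)=\ker\rho_1\cap\ker\rho_2$ and a direct sum of continuous finite-dimensional unitary representations is again one.

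Next, given a neighbourhood $U$ of the identity, I would exploit compactness of $G\setminus U$. For each $x\in G\setminus U$ there is, by the above, a continuous finite-dimensional unitary representation $\rho_x\colon G\to U(n_x)$ with $x\notin\ker\rho_x$; the sets $G\setminus\ker\rho_x$ are open and cover the compact set $G\setminus U$, so finitely many $\rho_{x_1},\dots,\rho_{x_k}$ suffice. Setting $\rho:=\rho_{x_1}\oplus\cdots\oplus\rho_{x_k}\colon G\to U(n)$, its kernel $K:=\ker\rho=\bigcap_{i=1}^{k}\ker\rho_{x_i}$ is a closed, hence compact, normal subgroup of $G$ with $K\subseteq U$.

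It remains to identify $G/K$ with a Lie group. The homomorphism $\rho$ is constant on the fibres of the quotient map $G\to G/K$, so by Proposition~\ref{PtQ} it factors as a continuous \emph{injective} group homomorphism $\overline{\rho}\colon G/K\to U(n)$. Since $G/K$ is compact and $U(n)$ is Hausdorff, $\overline{\rho}$ is a homeomorphism onto its image, and that image is a compact, in particular closed, subgroup of the Lie group $U(n)$. By Cartan's closed subgroup theorem this image is an embedded Lie subgroup, so $\overline{\rho}$ exhibits $G/K$ as isomorphic, as a topological group, to a Lie group, which is the assertion.

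I expect the only genuine subtlety to be the first reduction, namely passing from ``the finite-dimensional representations separate points'' to the existence of a \emph{single} representation whose kernel is contained in the prescribed neighbourhood $U$; this is exactly where compactness of $G\setminus U$ and the closure of the family of kernels under finite intersection (via direct sums) are used. The analytic heart of the argument, the Peter--Weyl theorem itself, together with Cartan's closed subgroup theorem, I would simply cite rather than prove.
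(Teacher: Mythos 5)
Your argument is correct and complete: the paper does not prove this theorem but simply cites Tao's book (Theorem~1.4.14 of \cite{TT}), where the compact case of Gleason--Yamabe is derived from the Peter--Weyl theorem in essentially exactly the way you describe. The one step worth being explicit about --- passing from ``representations separate points'' to a single representation with kernel inside $U$ via compactness of $G\setminus U$ and closure of kernels under finite intersection --- is handled correctly in your write-up.
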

The proof of this theorem can be found, e.g., in \cite[Theorem~1.4.14]{TT}.

\subsection{Groupoids}
We give a brief recollection of some basic notions concerning groupoids. Let us start by giving the definition:
\begin{defi} A groupoid $\mathcal{G}$ is a small category in which all the morphisms are isomorphisms. Let us denote by $\mathcal{G}_0$ the set of objects of this category (also called the base of $\mathcal{G}$) and by $\mathcal{G}_1$ the set of morphisms of this category. This implies the existence of the following five structure maps:
\begin{enumerate}\itemsep=0pt
\item[1)] the source map $s\colon \mathcal{G}_1\rightarrow\mathcal{G}_0$ which associates to each morphism its source;
\item[2)] the target map $t\colon \mathcal{G}_1\rightarrow\mathcal{G}_0$ which associates to each morphism its target;
\item[3)] the identity map $\operatorname{Id}\colon \mathcal{G}_0\rightarrow\mathcal{G}_1$ which associates to each object the identity over that object;
\item[4)] the inverse map $i\colon \mathcal{G}_1\rightarrow\mathcal{G}_1$ which associates to each morphism its inverse;
\item[5)] the multiplication (composition) map $\circ\colon \mathcal{G}_2\rightarrow\mathcal{G}_1$ which associates to each composable pair of morphisms its composition ($\mathcal{G}_2$ is the set of composable pairs).
\end{enumerate}
 A groupoid endowed with topologies on $\mathcal{G}_1$ and $\mathcal{G}_0$ which make all the structure maps continuous is called a topological groupoid. If additionally $\mathcal{G}_0$ and $\mathcal{G}_1$ are smooth manifolds, the source map is a surjective submersion and all the structure maps are smooth, then the topological groupoid is called a Lie groupoid.
\end{defi}
\begin{rem} Note that the identity map and the target map restricted to the image of the identity map are inverse to each other and so the identity map is an embedding. Hence, we can identify $\mathcal{G}_0$ with the image of the identity structure map.
\end{rem}
We denote by $\mathcal{G}_x$ the fibers of the source map (source fibers) and by $\mathcal{G}^x$ the fibers of the target map (target fibers). We also denote by $\mathcal{G}^y_x$ the set of morphisms with source~$x$ and target~$y$. For a Lie groupoid $\mathcal{G}_x$, $\mathcal{G}^x$ and $\mathcal{G}^y_x$ are all closed embedded submanifolds of~$\mathcal{G}_1$. What is more~$\mathcal{G}^x_x$ are Lie Groups. We also denote the image of~$\mathcal{G}_0$ through the identity structure map by $\operatorname{Id}_{\mathcal{G}}$.
\begin{defi} A morphism of groupoids is a pair $(F,f)\colon \mathcal{G}\rightarrow\mathcal{H}$ where $F\colon \mathcal{G}_1\rightarrow\mathcal{H}_1$ and $f\colon \mathcal{G}_0\rightarrow\mathcal{H}_0$ are functions which commute with the structure maps. If in addition~$\mathcal{G}$ and~$\mathcal{H}$ are topological (resp.\ Lie) groupoids and both $F$ and $f$ are continuous (resp.\ smooth) then~$(F,f)$ is called a~continuous (resp. smooth) morphism. If both~$F$ and $f$ are bijections (resp.\ homeo\-morphisms, diffeomorphisms) then $(F,f)$ is an isomorphism (resp.\ continuous isomorphism, smooth isomorphism) of groupoids. Furthermore, if f is the identity on $\mathcal{G}_0$ then the morphism~$(F,f)$ is called a base preserving morphism.
\end{defi}
Throughout this paper we are going to use several special classes of groupoids:
\begin{defi} A groupoid $\mathcal{G}$ is said to be transitive if for each pair of points $x,y\in\mathcal{G}_0$ there exists a morphism with source~$x$ and target~$y$. Conversely, if all the morphisms of~$\mathcal{G}$ are automorphisms (i.e., $s(g) = t(g)$ for all $g\in \mathcal{G}$) we say that~$\mathcal{G}$ is totally intransitive. A topological groupoid is said to be proper if the map $(s,t)\colon \mathcal{G}_1\rightarrow\mathcal{G}_0\times\mathcal{G}_0$ is proper.
\end{defi}
\begin{defi} A topological groupoid is principal if:
\begin{enumerate}\itemsep=0pt
\item[1)] the restriction of the target map to any source fiber is a quotient map (we write $t_x$ for the restriction of the target map to the source fiber over $x\in\mathcal{G}_0$);
\item[2)] for each $x\in\mathcal{G}_0$ the division map $\delta_x\colon \mathcal{G}_x\times\mathcal{G}_x\rightarrow\mathcal{G}_1$ defined by the formula $\delta_x(g,h)=g\circ h^{-1}$ is a quotient map.
\end{enumerate}
\end{defi}
\begin{rem} This notion is more commonly used in a different sense (see, e.g.,~\cite{R}). The above definition (used, e.g., in~\cite{McK1}) is convenient to the study of transitive topological groupoids whilst in the Lie case it is equivalent to local triviality of a transitive groupoid. However, the terminology of locally trivial Lie groupoids is itself rarely used due to a result of Pradines stating that every transitive Lie groupoid over a connected base is locally trivial.
\end{rem}
\begin{egg}Let G be a topological (resp. Lie) group and let M be a topological space (resp.\ smooth manifold). $M\times G\times M$ is a topological (resp.\ Lie) groupoid over M with source and target maps given by projections onto the first and third factor and composition law given by the formula
\begin{gather*}
(x,g,y)\circ (y,h,z)=(x,gh,z).
\end{gather*}
Groupoids of this form are called trivial groupoids. If $G$ is the trivial group then the above groupoid is called the pair groupoid of $M$. Such groupoids are principal topological groupoids.
\end{egg}
More examples of principal groupoids can be easily provided by the gauge groupoid construction presented in the subsequent section.
\begin{prop}[{\cite[Proposition 3.2]{McK1}}] A transitive groupoid $\mathcal{G}$ is isomorphic $($not in a continuous manner$)$ to the trivial groupoid $\mathcal{G}_0\times\mathcal{G}^x_x\times\mathcal{G}_0$.
\end{prop}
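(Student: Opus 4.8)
The plan is to build the isomorphism from a purely set-theoretic ``trivialisation'' of $\mathcal{G}$. Fix a point $x\in\mathcal{G}_0$. By transitivity each set $\mathcal{G}^y_x$ is non-empty, so the axiom of choice yields a family $(\sigma_y)_{y\in\mathcal{G}_0}$ with $\sigma_y\in\mathcal{G}^y_x$, which I normalise by $\sigma_x=\operatorname{Id}_x$. I then define $F\colon\mathcal{G}_1\rightarrow\mathcal{G}_0\times\mathcal{G}^x_x\times\mathcal{G}_0$ by
\begin{gather*}
F(g)=\bigl(t(g),\ \sigma_{t(g)}^{-1}\circ g\circ\sigma_{s(g)},\ s(g)\bigr),
\end{gather*}
observing that $\sigma_{s(g)}$ runs from $x$ to $s(g)$, that $g$ runs from $s(g)$ to $t(g)$ and that $\sigma_{t(g)}^{-1}$ runs from $t(g)$ back to $x$, so that the middle entry genuinely lies in $\mathcal{G}^x_x$. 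The claim will be that $(F,\operatorname{id}_{\mathcal{G}_0})$ is a base preserving isomorphism of groupoids.

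First I would check that $(F,\operatorname{id}_{\mathcal{G}_0})$ commutes with the structure maps. Compatibility with source and target is built into the definition of $F$; compatibility with the identity map is the computation $\sigma_y^{-1}\circ\operatorname{Id}_y\circ\sigma_y=\operatorname{Id}_x$. For multiplicativity, take a composable pair $\beta\in\mathcal{G}^b_a$, $\alpha\in\mathcal{G}^c_b$: then $F(\alpha)$ and $F(\beta)$ are composable in the trivial groupoid (the shared base-point being $b$), and the telescoping identity $\bigl(\sigma_{c}^{-1}\circ\alpha\circ\sigma_{b}\bigr)\circ\bigl(\sigma_{b}^{-1}\circ\beta\circ\sigma_{a}\bigr)=\sigma_{c}^{-1}\circ(\alpha\circ\beta)\circ\sigma_{a}$ shows $F(\alpha\circ\beta)=F(\alpha)\circ F(\beta)$; compatibility with the inverse then follows formally. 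To finish, I would write down the inverse explicitly: $E(b,\gamma,a)=\sigma_b\circ\gamma\circ\sigma_a^{-1}\in\mathcal{G}^b_a$ is easily seen to satisfy $E\circ F=\operatorname{id}_{\mathcal{G}_1}$ and $F\circ E=\operatorname{id}$, so $F$ is a bijection and hence $(F,\operatorname{id}_{\mathcal{G}_0})$ an isomorphism of groupoids.

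Regarding the parenthetical ``not in a continuous manner'': when $\mathcal{G}$ is a topological groupoid there is in general no continuous choice $y\mapsto\sigma_y$, since such a choice would in particular equip the maps $t_x$ with continuous right inverses and thus force the kind of local triviality that fails for a general transitive topological groupoid; so the construction above produces only an algebraic isomorphism. There is no serious obstacle in the argument — it is essentially a diagram chase — and the only points demanding care are keeping the bookkeeping of sources, targets and the order of composition consistent with the convention adopted for the trivial groupoid $\mathcal{G}_0\times\mathcal{G}^x_x\times\mathcal{G}_0$ (in particular which of the two outer projections is the source), and making the failure of continuity precise.
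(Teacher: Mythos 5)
Your construction is correct and is exactly the standard trivialisation argument: the paper does not prove this statement itself but cites Mackenzie, whose proof is precisely your choice of a section $(\sigma_y)$ via transitivity together with the conjugation formula $g\mapsto\bigl(t(g),\sigma_{t(g)}^{-1}\circ g\circ\sigma_{s(g)},s(g)\bigr)$. Your bookkeeping of sources and targets is consistent with the paper's convention for the trivial groupoid (source equals the third coordinate), so there is nothing to correct.
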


\begin{rem} It is worth noting that for a transitive topological groupoid $\mathcal{G}$ and a given source fiber $\mathcal{G}_x$ composition with $h\colon x\rightarrow y$ gives a homeomorphism $\tilde{h}\colon \mathcal{G}_y\rightarrow \mathcal{G}_x$ since it has an inverse (composition with $h^{-1}$) and conjugation by~$h$ ($hgh^{-1}$ for $g\in\mathcal{G}_x^x$) gives a continuous isomorphism of topological groups $h^*\colon \mathcal{G}^x_x\rightarrow \mathcal{G}^y_y$ since it has an inverse (conjugation by $h^{-1}$). Hence, we write~``$\mathcal{G}_x$ (resp.~$\mathcal{G}_x^x$) has property~$P$'' as shorthand for ``$\mathcal{G}_x$ (resp.\ $\mathcal{G}^x_x$) has property~$P$ for some $x\in\mathcal{G}_0$ and equivalently for any $x\in\mathcal{G}_0$'' whenever this remark can be applied. Moreover, since $t_y(\tilde{h})=t_x$ and $\delta_y(\tilde{h},\tilde{h})=\delta_x$ we write~``$t_x$~(resp.~$\delta_x$) has property $P$'' as shorthand for~``$t_x$~(resp.~$\delta_x$) has property $P$ for some $x\in\mathcal{G}_0$ and equivalently for any $x\in\mathcal{G}_0$'' whenever this remark can be applied.
\end{rem}

We are going to use the following theorem concerning groupoid morphisms with principal source:
\begin{tw}[{\cite[Proposition 1.21]{McK1}}] \label{AFPG} Let $\mathcal{G}$ be a principal topological groupoid and $\mathcal{G}'$ be any topological groupoid. Additionally, let $(\phi,f)\colon \mathcal{G}\rightarrow\mathcal{G}'$ be a morphism of groupoids $($not necessarily continuous$)$. If there is an $x\in\mathcal{G}_0$ such that $\phi_x\colon \mathcal{G}_x\rightarrow\mathcal{G}'_{f(x)}$ is continuous then $(\phi,f)$ is continuous.
\end{tw}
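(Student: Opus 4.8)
The plan is to establish the continuity of $f$ and of $\phi$ separately, in each case by recognising the map in question as the factorisation of a manifestly continuous map through one of the two quotient maps that constitute the principal structure of $\mathcal{G}$ --- namely $t_x$ for $f$ and the division map $\delta_x$ for $\phi$ --- and then invoking the universal property of quotient maps (Proposition~\ref{PtQ}). Fix a point $x\in\mathcal{G}_0$ at which $\phi_x$ is continuous, and write $s',t'$ for the source and target maps of $\mathcal{G}'$. I first record the elementary observation that, since $(\phi,f)$ commutes with the source maps, every $g\in\mathcal{G}_x$ satisfies $s'(\phi(g))=f(s(g))=f(x)$, so that the restriction of $\phi$ to the source fibre $\mathcal{G}_x$ is exactly the given continuous map $\phi_x\colon\mathcal{G}_x\to\mathcal{G}'_{f(x)}$.

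For $f$, I would use that $(\phi,f)$ commutes with the target maps: for every $g\in\mathcal{G}_x$,
\begin{equation*}
f\bigl(t_x(g)\bigr)=f\bigl(t(g)\bigr)=t'\bigl(\phi(g)\bigr)=t'\bigl(\phi_x(g)\bigr),
\end{equation*}
so that $f\circ t_x=t'\circ\phi_x$ is continuous. This composite is constant on each fibre of $t_x$ (on the fibre $\mathcal{G}^y_x$ the map $t_x$ is constantly $y$, so $f\circ t_x$ is constantly $f(y)$ there), hence Proposition~\ref{PtQ} applied to the quotient map $t_x$ supplies a continuous $\overline f\colon\mathcal{G}_0\to\mathcal{G}'_0$ with $\overline f\circ t_x=f\circ t_x$; since $t_x$ is surjective this forces $\overline f=f$, and $f$ is continuous.

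For $\phi$, I would use that $(\phi,f)$ commutes with composition and inversion: for any $(g,h)\in\mathcal{G}_x\times\mathcal{G}_x$,
\begin{equation*}
\phi\bigl(\delta_x(g,h)\bigr)=\phi\bigl(g\circ h^{-1}\bigr)=\phi(g)\circ\phi(h)^{-1}=\phi_x(g)\circ\phi_x(h)^{-1}.
\end{equation*}
Since $\phi_x(g)$ and $\phi_x(h)$ both lie in the source fibre $\mathcal{G}'_{f(x)}$, the right-hand side is exactly the value at $(\phi_x(g),\phi_x(h))$ of the division map of $\mathcal{G}'$ over $f(x)$; thus $\phi\circ\delta_x$ is the composite of the continuous map $\phi_x\times\phi_x$ with that division map, and the latter is continuous because it is assembled from the continuous composition and inversion of $\mathcal{G}'$ and is everywhere defined (its two arguments share a source, just as in the hypothesis on $\mathcal{G}$). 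So $\phi\circ\delta_x$ is continuous, and it is plainly constant on the fibres of $\delta_x$; Proposition~\ref{PtQ} applied to the quotient map $\delta_x$ then yields a continuous $\overline\phi\colon\mathcal{G}_1\to\mathcal{G}'_1$ with $\overline\phi\circ\delta_x=\phi\circ\delta_x$, and surjectivity of $\delta_x$ forces $\overline\phi=\phi$. Hence $(\phi,f)$ is continuous.

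I do not expect a genuine obstacle here: the argument is a short diagram chase whose only real idea is to feed $t_x$ and $\delta_x$ into Proposition~\ref{PtQ}. The one point that deserves care is the continuity of $\phi\circ\delta_x$ --- the claim that it factors as $\phi_x\times\phi_x$ followed by a continuous division map of $\mathcal{G}'$ --- and this is exactly where one uses that $\phi_x$ takes values in a single source fibre of $\mathcal{G}'$, so that $\phi(g)\circ\phi(h)^{-1}$ is always defined. Note that $\mathcal{G}'$ is otherwise completely arbitrary: only continuity of its composition and inversion maps enters the proof.
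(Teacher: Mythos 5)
The paper does not prove this statement --- it is imported verbatim from \cite[Proposition~1.21]{McK1} --- so there is no in-paper proof to compare against. Your argument is correct and is essentially the standard one from the cited reference: you factor the continuous map $t'\circ\phi_x$ through the quotient map $t_x$ to get continuity of $f$, and the continuous map $\delta'_{f(x)}\circ(\phi_x\times\phi_x)$ through the quotient map $\delta_x$ to get continuity of $\phi$, with the one genuinely necessary observation (that $\phi_x$ lands in a single source fibre of $\mathcal{G}'$, so the target division is everywhere defined and continuous) duly made.
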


The final notions we need to introduce here are subgroupoids and quotient groupoids:
\begin{defi} A subgroupoid of $\mathcal{G}$ is a pair of subset $\mathcal{G}'_1\subset\mathcal{G}_1$ and $\mathcal{G}'_0\subset\mathcal{G}_0$, such that $s(\mathcal{G}'_1)\subset\mathcal{G}'_0$, $t(\mathcal{G}'_1)\subset\mathcal{G}'_0$, $Id(\mathcal{G}'_0)\subset\mathcal{G}'_1$ and $\mathcal{G}'_1$ is closed under the composition and inversion structure maps. A subgroupoid $\mathcal{G}'$ of $\mathcal{G}$ is called wide if $\mathcal{G}'_0=\mathcal{G}_0$. A subgroupoid $\mathcal{N}$ of $\mathcal{G}$ is said to be normal if it is wide and for any $g\in\mathcal{N}_1$ and any $h\in\mathcal{G}_1$ satisfying $s(h)=s(g)=t(g)$ we have $hgh^{-1}\in\mathcal{N}_1$.
\end{defi}
To introduce the notion of a quotient topological groupoid we first need do specify the underlying groupoid structure on the appropriate quotient space. Since in this paper we are only interested in quotients by totally intransitive groupoids, we provide an appropriately restricted definition (see \cite{McK1} for a more general discussion of this topic).
\begin{defi} Let $\mathcal{N}$ be a totally intransitive normal subgroupoid of the groupoid $\mathcal{G}$. Define an equivalence relation $\sim$ on $\mathcal{G}_1$ by $g\sim h$ if and only if there exists $n_1,n_2\in\mathcal{N}$ such that $n_1gn_2=h$. We then define the quotient groupoid as $\mathcal{G}\slash\mathcal{N}:=(\mathcal{G}_1\slash\sim,\mathcal{G}_0)$ with the structure maps induced from $\mathcal{G}$.
\end{defi}
\begin{egg} Simplest examples of normal subgroupoids and quotient groupoids are provided by the trivial groupoids. A normal subgroupoid $\mathcal{N}$ of a trivial groupoid $\mathcal{G}=M\times G\times M$ is of the form $\bigcup\limits_{x\in M} \{ x\}\times N\times \{ x\}$ for some normal subgroup $N$ of $G$ whilst the quotient $\mathcal{G}\slash\mathcal{N}$ is simply $M\times (G\slash N)\times M$.
\end{egg}
It was proven in \cite{BH} that if $\mathcal{G}$ is a topological groupoid then $\mathcal{G}\slash\mathcal{N}$ can be endowed with a~unique topology such that it makes $\mathcal{G}\slash\mathcal{N}$ into a topological groupoid, the projection $\pi\colon \mathcal{G}\rightarrow\mathcal{G}\slash\mathcal{N}$ is continuous and for every continuous morphism of topological groupoids $(\phi,f)\colon \mathcal{G}\rightarrow\mathcal{G'}$ such that $\phi (\mathcal{N})\subset \operatorname{Id}_{\mathcal{G}'}$ there is a unique continuous morphism $(\overline{\phi},f)\colon \mathcal{G}\slash\mathcal{N}\rightarrow\mathcal{G}'$ satisfying $\overline{\phi}\circ\pi=\phi$ (the last condition is called the universal property of topological quotient groupoids). From now on quotient groupoids of a topological groupoid will be considered with this topology. It is important to note that this topology need not coincide with the quotient topology of $\mathcal{G}_1\slash\sim$ (cf.~\cite{McK1}). This will give rise to minor difficulties when proving the Gleason--Yamabe theorem for transitive groupoids.

We conclude this section by recalling an important technical result from \cite{H5oids} which also finds a use in the current paper:
\begin{prop}\label{eq} Let $\mathcal{G}$ be a transitive topological groupoid with $\mathcal{G}_1$ first countable. Then the following conditions are equivalent:
\begin{enumerate}\itemsep=0pt
\item[$1)$] $t_x$ is a quotient map;
\item[$2)$] $t_x$ is open;
\item[$3)$] $\delta_x$ is a quotient map;
\item[$4)$] $\delta_x$ is open.
\end{enumerate}
Furthermore, if $(s,t)\colon \mathcal{G}_1\rightarrow \mathcal{G}_0\times\mathcal{G}_0$ is a quotient map, then the above properties hold.
\end{prop}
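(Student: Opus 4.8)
The plan is to derive the equivalence of conditions $1)$–$4)$ from two structural observations — that $t_x$ and $\delta_x$ are orbit maps of the group $\mathcal{G}_x^x$, and that $\delta_x$ is a base change of $t_x$ — and then to reduce the last assertion to an openness statement for $(s,t)$, which I expect to be the only genuinely delicate point.

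First I would use that $\mathcal{G}_x^x$ acts continuously on $\mathcal{G}_x$ by $g\mapsto g\circ k$ and on $\mathcal{G}_x\times\mathcal{G}_x$ by the diagonal action $(g,h)\mapsto(g\circ k,h\circ k)$, every translation being a homeomorphism (with inverse the translation by $k^{-1}$). A routine check with the groupoid axioms shows that two elements of $\mathcal{G}_x$ have equal image under $t_x$ exactly when they lie in one orbit, and two elements of $\mathcal{G}_x\times\mathcal{G}_x$ have equal image under $\delta_x$ exactly when they lie in one orbit of the diagonal action. Hence, for open $U\subseteq\mathcal{G}_x$, the saturation $t_x^{-1}(t_x(U))=\bigcup_{k\in\mathcal{G}_x^x}(U\circ k)$ is a union of homeomorphic copies of $U$, so it is open, and likewise $\delta_x^{-1}(\delta_x(W))$ is open for every open $W\subseteq\mathcal{G}_x\times\mathcal{G}_x$. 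Therefore, if $t_x$ is a quotient map it carries open sets to open sets, i.e.\ it is open; conversely an open continuous surjection is a quotient map (both $t_x$ and $\delta_x$ are surjective by transitivity). This gives $1)\Leftrightarrow2)$ and, by the same argument, $3)\Leftrightarrow4)$.

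Next I would identify $\delta_x$ with a base change of $t_x$: the assignment $(g,h)\mapsto(g\circ h^{-1},h)$ is a homeomorphism of $\mathcal{G}_x\times\mathcal{G}_x$ onto the fiber product $\mathcal{G}_1\times_{\mathcal{G}_0}\mathcal{G}_x$ formed from $s\colon\mathcal{G}_1\to\mathcal{G}_0$ and $t_x\colon\mathcal{G}_x\to\mathcal{G}_0$ (its inverse being $(m,h)\mapsto(m\circ h,h)$), and it carries $\delta_x$ to the projection $\pi_1\colon\mathcal{G}_1\times_{\mathcal{G}_0}\mathcal{G}_x\to\mathcal{G}_1$, i.e.\ to the pullback of $t_x$ along $s$. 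Since openness of maps is stable under base change, $2)$ implies that $\pi_1$, hence $\delta_x$, is open, which is $4)$. For the reverse implication, note that $s$ is a quotient map, since it admits the continuous section $\operatorname{Id}\colon\mathcal{G}_0\to\mathcal{G}_1$; so if $\delta_x$, and therefore $\pi_1$, is open, then for each open $V\subseteq\mathcal{G}_x$ the set $s^{-1}(t_x(V))=\pi_1(\pi_2^{-1}(V))$ is open in $\mathcal{G}_1$, whence $t_x(V)$ is open in $\mathcal{G}_0$; this is $4)\Rightarrow2)$. Combining the two steps yields $1)\Leftrightarrow2)\Leftrightarrow4)\Leftrightarrow3)$; note that first countability has not been used so far.

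For the last assertion, suppose $(s,t)$ is a quotient map. Because $\mathcal{G}_x=(s,t)^{-1}(\{x\}\times\mathcal{G}_0)$ and the restriction of $(s,t)$ to $\mathcal{G}_x$ is, after identifying $\{x\}\times\mathcal{G}_0$ with $\mathcal{G}_0$, precisely $t_x$, the map $t_x$ is the base change of $(s,t)$ along $\{x\}\times\mathcal{G}_0\hookrightarrow\mathcal{G}_0\times\mathcal{G}_0$; so, by stability of openness under base change once more, it would suffice to promote $(s,t)$ from a quotient map to an \emph{open} map, after which $1)$–$4)$ follow from the first part. This promotion is the step I expect to be the main obstacle, and the one that genuinely needs first countability: $(s,t)$ is \emph{not} an orbit map of a group action, since the groups $\mathcal{G}_y^y$ vary over the base, so the saturation argument above breaks down. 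The approach would be to observe that $\mathcal{G}_0$, embedded in $\mathcal{G}_1$ via $\operatorname{Id}$, is first countable and hence sequential, so that it suffices to test openness of $(s,t)$ sequentially; and then, given an open $V$ containing some $g_0\colon a_0\to b_0$ and a sequence $(a_n,b_n)\to(a_0,b_0)$ in $\mathcal{G}_0\times\mathcal{G}_0$, to construct morphisms $g_n\colon a_n\to b_n$ converging to $g_0$ — hence lying in $V$ eventually — in the form $g_n=q_n\circ g_0\circ p_n$ with $p_n\colon a_n\to a_0$ and $q_n\colon b_0\to b_n$ chosen convergent to the relevant identity morphisms. Producing such convergent choices from the bare quotient property of $(s,t)$ — equivalently, showing that a quotient map of this shape is sequence covering — is the crux of the whole statement.
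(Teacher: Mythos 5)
Your proof of the equivalences $1)\Leftrightarrow2)\Leftrightarrow3)\Leftrightarrow4)$ is correct and complete: the fibres of $t_x$ and of $\delta_x$ are indeed the orbits of the right $\mathcal{G}_x^x$-action and of its diagonal, so saturations of open sets are open and ``quotient'' upgrades to ``open'' for both maps; and your identification of $\delta_x$ with the pullback of $t_x$ along $s$ (together with the observation that $s$ is a quotient map because it admits the section $\operatorname{Id}$) correctly links the two pairs. For the record, the present paper only quotes this proposition from the earlier work and contains no proof, so there is nothing internal to compare with; your argument for this half is clean and, as you note, does not use first countability.

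The genuine gap is the final clause, and it is not merely an omitted routine step: as sketched, your plan for it is circular. You reduce ``$(s,t)$ is a quotient map $\Rightarrow$ $t_x$ is open'' to showing $(s,t)$ is open, and propose to verify the latter by constructing $g_n=q_n\circ g_0\circ p_n$ with $p_n\colon a_n\to a_0$ converging to $\operatorname{Id}_{a_0}$; but producing such $p_n$ is precisely the problem of lifting the sequence $a_n\to a_0$ through $t_{a_0}$ (up to inversion), i.e., the sequence-covering property of $t_x$ --- which, by the first half of your own argument, is essentially the openness of $t_x$ that you are trying to prove. The new input must come from the quotient property of $(s,t)$ together with first countability of $\mathcal{G}_1$, and the missing lemma is that a quotient map with first-countable domain is \emph{sequentially quotient}: every sequence $y_n\to y$ in the target admits a subsequence whose fibres contain points converging to a point of the fibre over $y$ (roughly: otherwise $\bigcup_n f^{-1}(y_n)$ would be closed near $f^{-1}(y)$, forcing $\{y_n\}$ to be closed near $y$ by the quotient property, contradicting $y_n\to y$). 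Applying this to $(s,t)$ along $(x,y_n)\to(x,y)$ yields $h_k\in\mathcal{G}_x^{y_{n_k}}=t_x^{-1}(y_{n_k})$ with $h_k\to h\in t_x^{-1}(y)$; the transitive right $\mathcal{G}_x^x$-action on each fibre then lets you prescribe the limit point, and since $\mathcal{G}_0\cong\operatorname{Id}(\mathcal{G}_0)\subseteq\mathcal{G}_1$ is first countable, hence sequential, this shows that saturated open subsets of $\mathcal{G}_x$ have sequentially open, hence open, image, i.e., $t_x$ is open. Without this lemma (or a substitute for it), the last sentence of the proposition remains unproved.
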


\subsection{Cartan principal bundles}
In this section we give a brief recollection of principal bundles, Cartan principal bundles, how they relate to each other as well as some results from \cite{P} which are of key importance to the present paper. A more detailed exposition of this subject can be found in~\cite{McK1} and~\cite{P}.
\begin{defi} A Cartan principal bundle is a quadruple $(P,B,G,\pi)$, where $P$ and $B$ are topological spaces, $G$ is a topological group acting freely on $P$ and $\pi\colon P\rightarrow B$ is a surjective continuous map, with the following properties:
\begin{enumerate}\itemsep=0pt
\item[1)] $\pi$ is a quotient map with fibers coinciding with the orbits of the action of $G$ on $P$,
\item[2)] the division map $\delta\colon P_{\pi}\rightarrow G$ with domain $P_{\pi}:=\{(u,v)\in P\times P\, |\, \pi(u)=\pi(v)\}$ defined by the property $\delta(ug,u)=g$ is continuous.
\end{enumerate}
\end{defi}
We are also going to need a notion of morphism between such bundles:
\begin{defi}A morphism of Cartan principal bundles is a triple
\begin{gather*} (F,f,\phi)\colon \ (P,B,G,\pi)\rightarrow (P',B',G',\pi'),\end{gather*} where $F\colon P\rightarrow P'$ and $f\colon B\rightarrow B'$ are continuous functions and $\phi\colon G\rightarrow G'$ is a continuous morphism of topological groups such that
\begin{gather*}
\pi'\circ F=f\circ\pi, \qquad F(pg)=F(p)\phi(g)
\end{gather*}
for $p\in P$ and $g\in G$. A morphism of Cartan principal bundles is said to be base preserving if~$f$ is the identity on~$B$.
\end{defi}
\begin{defi} A Cartan principal bundle $(P,B,G,\pi)$ is called proper if the action of~$G$ on~$P$ is proper.
\end{defi}

A better known and stronger notion is the following:
\begin{defi} A principal bundle is a quadruple $(P,B,G,\pi)$, where $P$ and $B$ are topological spaces, $G$ is a topological group acting freely on $P$ and $\pi\colon P\rightarrow B$ is a surjective continuous map, with the following properties:
\begin{enumerate}\itemsep=0pt
\item[1)] the fibers of $\pi$ coincide with the orbits of the action of $G$;
\item[2)] (local triviality) there is an open covering $U_i$ of $B$ and continuous maps $\sigma_i\colon U_i\rightarrow P$ such that $\pi\circ\sigma_i=\operatorname{Id}_{U_i}$.
\end{enumerate}
A principal bundle is said to be smooth if $P$ and $B$ are smooth manifolds, $G$ is a Lie group, and the action, projection and $\sigma_i$ are smooth maps.
\end{defi}

We are now going to present important constructions from \cite{McK1} and \cite{McK2} which relate the notion of Cartan principal bundles to principal groupoids. Given a principal groupoid $\mathcal{G}$ the quadruple $(\mathcal{G}_x,\mathcal{G}_0,\mathcal{G}^x_x,t_x)$ constitutes a Cartan principal bundle for any point $x\in\mathcal{G}_0$ (this is called the \emph{vertex bundle of} $\mathcal{G}$ \emph{at} $x$). It is easy to see that given a morphism of groupoids $(F,f)\colon \mathcal{G}\rightarrow\mathcal{G}'$ the restriction of the map $F$ to $\mathcal{G}_x$ gives a morphism of bundles $F|_{\mathcal{G}_x}\colon \mathcal{G}_x\rightarrow\mathcal{G}_{f(x)}$. It is also worth noting that even though this construction is dependent on the choice of $x$ all the vertex bundles are continuously isomorphic by use of translations (cf.~\cite{McK1}). In the other direction given a Cartan principal bundle $(P,B,G,\pi)$ there exists a structure of a topological groupoid over $B$ on $(P\times P)\slash G$ (this is called the \emph{gauge groupoid of} $(P,B,G,\pi)$). Furthermore, a morphism of Cartan principal bundles
\begin{gather*}(F,f,\phi)\colon \ (P,B,G,\pi)\rightarrow (P',B',G',\pi')
\end{gather*} induces a morphism of gauge groupoids $F^*$ defined by $F^*([(u,v)])=[F(u),F(v)]$. It is apparent from the form of the induced morphisms that a base preserving morphism of Cartan principal bundles induces a base preserving morphism of the corresponding gauge groupoids and that a~base preserving morphism of principal groupoids induces a base preserving morphism of vertex bundles. We give the following theorem which was proven in~\cite{McK1}:
\begin{tw}\label{CPB} The constructions above are mutually inverse $($up to a~continuous base preserving isomorphism$)$ and give a one to one correspondence between continuous isomorphism classes of Cartan principal bundles and continuous isomorphism classes of principal groupoids.
\end{tw}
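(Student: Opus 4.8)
The plan is to exhibit, for each of the two constructions, an explicit comparison map with the identity, to check that it is a continuous base preserving isomorphism, and finally to observe that both constructions are functorial, so that they descend to mutually inverse bijections on isomorphism classes.

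First I would analyse the round trip that starts from a principal groupoid $\mathcal{G}$. Fixing $x\in\mathcal{G}_0$ and writing $\mathcal{K}:=(\mathcal{G}_x\times\mathcal{G}_x)\slash\mathcal{G}^x_x$ for the gauge groupoid of the vertex bundle $(\mathcal{G}_x,\mathcal{G}_0,\mathcal{G}^x_x,t_x)$, I would define $\Psi\colon\mathcal{K}\to\mathcal{G}_1$ by $\Psi([g,h])=g\circ h^{-1}$. It is routine that $\Psi$ is well defined (the division map $\delta_x$, $\delta_x(g,h)=g\circ h^{-1}$, is constant on the orbits $(g,h)\sim(g\circ a,h\circ a)$ with $a\in\mathcal{G}^x_x$), that it is a base preserving morphism of groupoids, that it is surjective because $t_x$ is onto by transitivity, and that it is injective because the $\mathcal{G}^x_x$-action is free. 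The one point that needs care is that $\Psi$ is a homeomorphism: the structure-map composite $\delta_x\colon\mathcal{G}_x\times\mathcal{G}_x\to\mathcal{G}_1$ is continuous and, by the definition of a principal groupoid, a quotient map, while the orbit projection $q\colon\mathcal{G}_x\times\mathcal{G}_x\to\mathcal{K}$ is a quotient map because $\mathcal{K}$ carries the quotient topology; since $\delta_x$ and $q$ have exactly the same fibers (the $\mathcal{G}^x_x$-orbits), Proposition~\ref{PtQ} applied in both directions produces mutually inverse continuous maps between $\mathcal{G}_1$ and $\mathcal{K}$, one of which is $\Psi$. Hence $\Psi$ is a continuous base preserving isomorphism.

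Next I would analyse the opposite round trip, starting from a Cartan principal bundle $(P,B,G,\pi)$ with gauge groupoid $\mathcal{G}:=(P\times P)\slash G$. Fixing $b\in B$ and $u_0\in\pi^{-1}(b)$, I would define $\Phi\colon P\to\mathcal{G}_b$ by $\Phi(u)=[u,u_0]$. Freeness of the action together with the fiber condition shows that every point of the source fiber $\mathcal{G}_b$ has a unique representative of the form $(u,u_0)$, so $\Phi$ is a bijection; it intertwines $\pi$ with $t_b$ and is equivariant with respect to the natural isomorphism $G\cong\mathcal{G}^b_b$, so it is a base preserving morphism of Cartan principal bundles. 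Continuity of $\Phi$ is immediate; for the continuity of $\Phi^{-1}$ I would use that the orbit map $P\times P\to\mathcal{G}$ is open (being the orbit map of a group action), hence restricts to a quotient map $P\times\pi^{-1}(b)\to\mathcal{G}_b$, and that $(u,v)\mapsto u\cdot\delta(v,u_0)^{-1}$ defines a continuous map on $P\times\pi^{-1}(b)$ which is constant on $G$-orbits and therefore descends to $\Phi^{-1}$. Hence $\Phi$ is a continuous base preserving isomorphism.

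Finally I would record functoriality. A morphism $(F,f,\phi)$ of Cartan principal bundles induces the morphism $F^{*}([u,v])=[F(u),F(v)]$ of gauge groupoids, with $\operatorname{id}^{*}=\operatorname{id}$ and $(F'\circ F)^{*}=(F')^{*}\circ F^{*}$, so that isomorphisms, and base preserving isomorphisms, go to isomorphisms, resp.\ base preserving isomorphisms; dually, a morphism $(\phi,f)$ of principal groupoids restricts to the morphism $\phi|_{\mathcal{G}_x}$ of vertex bundles, again functorially. Combining this with the two round trips above — and with the fact, recalled earlier, that the vertex bundles at different basepoints are continuously isomorphic via translations, so that the gauge-to-vertex assignment is well defined on isomorphism classes — yields that the two constructions descend to mutually inverse bijections between continuous isomorphism classes, which is the assertion. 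The main obstacle is the single topological identification of $\mathcal{K}$ with $\mathcal{G}_1$ in the first round trip; this is precisely where the hypothesis that $\mathcal{G}$ be \emph{principal} (equivalently, that $\delta_x$, or $t_x$, be a quotient map) enters, and everything else is diagram chasing.
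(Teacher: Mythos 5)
The paper gives no proof of this statement: it is quoted from Mackenzie's book \cite{McK1} and used as a black box, so there is no in-text argument to compare yours against. Judged on its own, your proof is correct and reconstructs the standard argument. The two comparison maps are the right ones, and you have correctly isolated the one genuinely non-formal point: the map $[g,h]\mapsto g\circ h^{-1}$ from the gauge groupoid of the vertex bundle back to $\mathcal{G}_1$ is a continuous bijection for purely algebraic reasons, but it is a \emph{homeomorphism} only because $\delta_x$ is assumed to be a quotient map with fibers equal to the diagonal $\mathcal{G}^x_x$-orbits, so that Proposition~\ref{PtQ} can be applied in both directions; this is exactly where principality enters. Two small points are worth tightening. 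First, for the correspondence to be well posed you should also record that each construction lands in the advertised class: that the vertex bundle of a principal groupoid is a Cartan principal bundle (continuity of its division map comes from continuity of composition and inversion in $\mathcal{G}_1$, and condition (1) of a Cartan bundle is condition (1) of principality), and that the gauge groupoid of a Cartan principal bundle is itself principal --- the latter drops out of your second round trip, since $t_b\circ\Phi=\pi$ and $\delta_b$ corresponds to the bundle's division data under $\Phi\times\Phi$, but it should be said. Second, injectivity of $\Psi$ is not really a consequence of freeness of the $\mathcal{G}^x_x$-action; what you actually need (and in effect verify later anyway) is that $g\circ h^{-1}=g'\circ(h')^{-1}$ forces $(g',h')=(g\circ a,h\circ a)$ with $a=h^{-1}\circ h'\in\mathcal{G}^x_x$, i.e., that the fibers of $\delta_x$ are single orbits. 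With those glosses the argument is complete and is, as far as one can tell, the same proof as in \cite{McK1}.
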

We also need the following important results from \cite{P}:
\begin{tw}[{\cite[Proposition 1.2.5]{P}}]\label{Slice2} A Cartan principal bundle $(P,B,G,\pi)$ with $P$ and $B$ Tychonoff is proper.
\end{tw}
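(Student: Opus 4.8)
The plan is to exhibit the action map of $G$ on $P$ as a closed topological embedding; properness is then immediate. Writing the action on the right (as the formula $\delta(ug,u)=g$ suggests), consider the shear map $\Sigma\colon P\times G\to P\times P$, $\Sigma(u,g)=(ug,u)$. Since the action is free and the fibres of $\pi$ coincide with the $G$-orbits, a pair $(v,u)$ lies in the image of $\Sigma$ exactly when $v$ and $u$ lie in a common orbit, i.e., when $\pi(v)=\pi(u)$; hence $\Sigma$ is a continuous bijection of $P\times G$ onto $P_\pi$. The key point is that $\Sigma$ is actually a homeomorphism onto $P_\pi$: its set-theoretic inverse is $(v,u)\mapsto(u,\delta(v,u))$, which is continuous precisely because the division map $\delta\colon P_\pi\to G$ is continuous by the definition of a Cartan principal bundle (the other coordinate being a projection). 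Recall that the action is \emph{proper} by definition when the map $(u,g)\mapsto(ug,u)$ is a proper map, so it remains to check that $\Sigma\colon P\times G\to P\times P$ is proper.

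For this it suffices to show that $P_\pi$ is closed in $P\times P$. Here the separation hypothesis enters: as $B$ is Tychonoff it is Hausdorff, so the diagonal $\Delta_B$ is closed in $B\times B$; since $P_\pi=(\pi\times\pi)^{-1}(\Delta_B)$ and $\pi\times\pi$ is continuous, $P_\pi$ is closed in $P\times P$. (Only the Hausdorffness of $B$ is used in this argument; the assumption that $P$ is Tychonoff is retained merely to stay within the hypotheses of \cite[Proposition~1.2.5]{P}.)

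Combining the two steps, $\Sigma$ is a homeomorphism of $P\times G$ onto the closed subspace $P_\pi\subseteq P\times P$, i.e., a closed embedding. Every closed embedding is a proper map: its fibres are points, hence compact; it is a closed map, since a closed subset of $P\times G$ maps to a closed subset of $P_\pi$, which is closed in $P\times P$; and, this persisting after taking products with an arbitrary space, it is universally closed (equivalently, for compact $K\subseteq P\times P$ one has $\Sigma^{-1}(K)\cong K\cap P_\pi$, closed in $K$, hence compact). Thus the $G$-action on $P$ is proper and $(P,B,G,\pi)$ is a proper Cartan principal bundle. There is no genuinely hard step here: the argument is purely formal, and the only point demanding attention is the identification of $\Sigma^{-1}$ with the continuous division map, which is exactly where the Cartan condition is used.
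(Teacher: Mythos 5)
The paper does not prove this statement itself --- it is quoted verbatim from Palais \cite{P} --- so there is no internal proof to compare against; I can only assess your argument on its own terms and against Palais's. Your argument is internally correct: the shear map $\Sigma(u,g)=(ug,u)$ is a continuous bijection onto $P_\pi$ (freeness gives injectivity, ``fibres of $\pi$ = orbits'' gives the image), its inverse $(v,u)\mapsto(u,\delta(v,u))$ is continuous by the Cartan condition, and $P_\pi=(\pi\times\pi)^{-1}(\Delta_B)$ is closed because $B$ is Hausdorff; a closed embedding is universally closed, hence proper. So you have correctly proved that the action is proper \emph{in the Bourbaki sense} (properness of the map $(u,g)\mapsto(ug,u)$), using only Hausdorffness of $B$.

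The gap is that this is not the notion of properness in the cited source, and your own observation that the Tychonoff hypotheses go unused is the symptom. Palais defines a proper $G$-space by the existence of ``small'' neighbourhoods (every point has a neighbourhood $U$ such that every point of $P$ has a neighbourhood $V$ with $\{g: gV\cap U\neq\varnothing\}$ relatively compact in $G$), and his Proposition 1.2.5 is precisely the upgrade from the local Cartan condition to this uniform condition, which is where complete regularity of $P$ and $B$ (and Palais's standing assumption that $G$ is a Lie group) do real work. The two notions are genuinely inequivalent at the stated level of generality: $G=P=\mathbb{Q}$ acting on itself by translation over $B=\{\ast\}$ is a Cartan principal bundle with $P$, $B$ Tychonoff whose shear map is a homeomorphism of $\mathbb{Q}^2$ (so Bourbaki-proper), yet no open subset of $\mathbb{Q}$ has compact closure, so no neighbourhood is thin and the action is not Palais-proper. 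Since Theorems \ref{Slice}, \ref{Slice4} and \ref{Slice3} are also imported from \cite{P} and rely on Palais-properness (via the slice theorem), proving the Bourbaki version does not by itself license those later applications. Your argument is a genuinely useful first step --- for locally compact $G$ and $P$ it can be bridged to Palais's notion, and that bridge is where the omitted hypotheses must re-enter --- but as written it proves a different (and here strictly weaker) statement than the one the paper needs.
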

\begin{tw}[{\cite[Section 4.1]{P}}]\label{Slice} A Cartan principal bundle $(P,B,G,\pi)$ with $G$ a Lie group and $P$ Tychonoff is locally trivial.
\end{tw}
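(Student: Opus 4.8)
\emph{Proof sketch.} The statement asks only for local triviality, and since $B$ is not assumed Hausdorff one cannot route through the properness result of Theorem~\ref{Slice2}; instead I would work directly and build local sections. It suffices to produce, around each $p_0\in P$ with $b_0:=\pi(p_0)$, an open set $U\ni b_0$ and a continuous section $\sigma\colon U\to P$ of $\pi$. Given such a $\sigma$, the map $U\times G\to\pi^{-1}(U)$, $(b,g)\mapsto\sigma(b)g$, is a continuous bijection (surjective because the fibres of $\pi$ are exactly the free $G$-orbits) whose inverse $p\mapsto\bigl(\pi(p),\delta(p,\sigma(\pi(p)))\bigr)$ is continuous by axiom~2 in the definition of a Cartan principal bundle, noting $\bigl(p,\sigma(\pi(p))\bigr)\in P_\pi$. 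Thus $\pi^{-1}(U)\cong U\times G$, and the whole problem reduces to the existence of local sections.

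To obtain a section I would construct a \emph{slice} through $p_0$: a set $S\subset P$ containing $p_0$ for which the action map $G\times S\to P$, $(g,s)\mapsto sg$, is a homeomorphism onto an open neighbourhood of the orbit $p_0G$. Freeness then makes $\pi|_S\colon S\to\pi(S)$ a homeomorphism onto the open set $\pi(S)\ni b_0$, and $\sigma:=(\pi|_S)^{-1}$ is the required section. The first step is to observe that the Cartan hypothesis makes $P$ a Cartan $G$-space, i.e.\ every point has a \emph{thin} neighbourhood $N$ (one with $\{g\in G:Ng\cap N\neq\emptyset\}$ relatively compact): indeed $Ng\cap N\neq\emptyset$ forces $g\in\delta\bigl((N\times N)\cap P_\pi\bigr)$, and since the Lie group $G$ is locally compact, choosing a relatively compact neighbourhood of $e$ and using continuity of $\delta$ at $(p_0,p_0)$ (where it equals $e$), a small enough $N$ makes $\delta\bigl((N\times N)\cap P_\pi\bigr)$ relatively compact.

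The slice itself I would build by Gleason-type averaging, which is legitimate precisely because thinness confines the relevant integrals to a relatively compact part of $G$. Fix a convex chart $\log:=\exp^{-1}\colon V\to\Omega\subset\mathfrak{g}$ of $G$ near $e$ with $\Omega$ convex, shrink a thin neighbourhood $N$ of $p_0$ so that $\delta\bigl((N\times N)\cap P_\pi\bigr)$ is a relatively compact subset of $V$, and use complete regularity of $P$ to pick a continuous $f\colon P\to[0,1]$ with $f(p_0)=1$ and $f\equiv0$ off $N$. Then for $p$ in the open set $\{f>0\}\cap N$ the function $g\mapsto f(pg)$ is continuous, positive at $g=e$, and supported in a relatively compact subset of $V$, so against a left Haar measure $dg$ the barycentre
\begin{gather*}
\beta(p):=\frac{\int_{G}f(pg)\,\log(g)\,dg}{\int_{G}f(pg)\,dg}\ \in\ \Omega
\end{gather*}
is a well-defined continuous $\mathfrak{g}$-valued map near $p_0$. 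Substituting $g\mapsto h^{-1}g$, using left-invariance of $dg$ and the Baker--Campbell--Hausdorff expansion of $\log(h^{-1}g)$, one finds $\beta(ph)\approx\log(h^{-1})+\beta(p)$, i.e.\ $\beta$ is equivariant up to higher-order corrections; an implicit-function-theorem argument then upgrades this to show that $S:=\{p:\beta(p)=\beta(p_0)\}$ (after a harmless renormalisation) is a genuine slice, freeness giving that $S$ meets each nearby orbit and the quantitative equivariance giving that it does so exactly once and transversally.

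The main obstacle is exactly the non-compactness of $G$: for a compact group the averaging step is classical and unconditional, whereas here the integrals make sense only because the Cartan condition pins the relevant portion of each orbit inside a relatively compact piece of $G$. So the real work is the chain ``Cartan principal bundle $\Rightarrow$ Cartan $G$-space $\Rightarrow$ slices exist'', the second implication being the slice theorem for (free) actions of non-compact Lie groups; and the delicate point within it is controlling the failure of exact equivariance of the barycentre by means of the thinness estimates. This quantitative analysis is what is carried out in~\cite{P}.
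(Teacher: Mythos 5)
The paper does not actually prove this statement: it is imported verbatim from Palais \cite{P}, so the only meaningful comparison is with Palais's own argument. Your outer architecture is sound and does match his. The reduction of local triviality to the existence of local sections (with the inverse of $(b,g)\mapsto\sigma(b)g$ made continuous by the division map), the reduction of local sections to slices, and the observation that continuity of $\delta$ at $(p_0,p_0)$ together with local compactness of the Lie group $G$ produces thin neighbourhoods -- i.e., that $P$ is a Cartan $G$-space -- are all correct, and they are exactly the outer layers of the argument in \cite{P}.

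The gap sits in the one step that carries all the weight: the construction of the slice. The barycentre $\beta$ you define is \emph{not} equivariant for non-abelian $G$; integration commutes with the affine structure of the Lie algebra but not with the group law, so $\beta(ph)-\beta(p)-\log\big(h^{-1}\big)$ is only of the order $|\log h|\cdot\sup_g|\log g|$. Turning $\{\beta=\beta(p_0)\}$ into a genuine slice -- existence \emph{and} uniqueness of its intersection with each nearby orbit, openness of $SG$, and the homeomorphism $G\times S\to SG$ -- then requires a quantitative contraction/implicit-function argument that you assert rather than give; this is not ``a harmless renormalisation'' but the entire content of the theorem. Moreover it is not what \cite{P} actually does, so the reference you lean on does not fill the hole in the form you describe: Palais obtains an \emph{exactly} equivariant map by reducing to a compact subgroup containing the isotropy and invoking the compact-group slice theorem (Gleason, Mostow), where one averages a bump function against a finite-dimensional \emph{linear} representation -- so equivariance survives integration on the nose -- and then retracts a neighbourhood of the orbit of the group in the representation space back onto it. In short: the reductions are right, but the central device you propose is a different, unproved one, and the delicate step is precisely where it differs from the cited proof.
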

\begin{tw}[{\cite[Proposition 1.3.2]{P}}] \label{Slice4} Given a proper Cartan principal bundle $(P,B,G,\pi)$ with $P$ Tychonoff and a normal closed subgroup $N$ of $G$, $(P\slash N,B,G\slash N,\overline{\pi})$ is also a proper Cartan principal bundle with $P\slash N$ Tychonoff.
\end{tw}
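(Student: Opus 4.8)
The plan is to reduce the statement to two facts about the \emph{intermediate} Cartan principal bundle obtained by dividing only by $N$, and then to build the quotient bundle by hand. Write $q\colon P\to P/N$ for the orbit projection of the restricted $N$-action. First I would note that this restricted action is still proper: the map $P\times N\to P\times P$, $(p,n)\mapsto(pn,p)$, is the restriction of the corresponding proper map $P\times G\to P\times P$ to the closed subset $P\times N$, and the restriction of a proper map to a closed subspace is proper. Then I would check that $(P,P/N,N,q)$ is a Cartan principal bundle: $q$ is a quotient map --- indeed an open one, since orbit maps are always open --- with fibers the $N$-orbits, and its division map $(un,u)\mapsto n$ is continuous, being the restriction of the continuous division map $\delta\colon P_{\pi}\to G$ of the given bundle to the subspace $\{(u,v):q(u)=q(v)\}\subseteq P_{\pi}$, with values in $N$. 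Thus $(P,P/N,N,q)$ is a proper Cartan principal bundle with $P$ Tychonoff.

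The crux is then that $P/N$ is Tychonoff. Hausdorffness is immediate: $q$ is open and the orbit relation $\{(un,u)\}$ is closed in $P\times P$ (it is the image of the proper structure map of the $N$-action), so $P/N$ is Hausdorff. Complete regularity is the real content, and it is exactly where properness is used --- orbit spaces of free actions by closed subgroups on Tychonoff spaces need not be Tychonoff in general. I would derive it from the theory of proper Cartan principal bundles in \cite{P} --- namely, that the base of a proper Cartan principal bundle with Tychonoff total space is Tychonoff, the natural companion of Theorem~\ref{Slice2} --- or, if one prefers a self-contained argument, by promoting a continuous function separating a compact $N$-orbit from a disjoint closed invariant set to an $N$-invariant one. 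The same reasoning applied to $(P,B,G,\pi)$ itself shows that $B$ is Tychonoff.

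Next I would verify that $(P/N,B,G/N,\overline{\pi})$ is a Cartan principal bundle, where $G/N$ acts by $[p]\cdot[g]:=[pg]$ (well defined since $N\triangleleft G$) and $\overline{\pi}$ is induced by $\pi$. The action is free: $[pg]=[p]$ forces $pg=pn$ for some $n\in N$, hence $g=n$ by freeness of the $G$-action, so $[g]$ is trivial. The map $\overline{\pi}$ is a quotient map because $\pi=\overline{\pi}\circ q$ with $\pi$ and $q$ quotient maps, and $\overline{\pi}^{-1}(b)=q(\pi^{-1}(b))$ is the image of a $G$-orbit, i.e.\ a $G/N$-orbit. For the division map $\overline{\delta}\colon(P/N)_{\overline{\pi}}\to G/N$: the subset $P_{\pi}$ is saturated for the open map $q\times q\colon P\times P\to (P/N)\times(P/N)$ (since $\pi$ is $N$-invariant), so $(q\times q)|_{P_{\pi}}\colon P_{\pi}\to(P/N)_{\overline{\pi}}$ is an open quotient map; the continuous map $\mathrm{pr}\circ\delta\colon P_{\pi}\to G/N$ is constant on its fibers, so Proposition~\ref{PtQ} produces the required continuous $\overline{\delta}$.

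Finally, $(P/N,B,G/N,\overline{\pi})$ is a Cartan principal bundle with both $P/N$ and $B$ Tychonoff, so Theorem~\ref{Slice2} immediately yields that it is proper, completing the argument. (Alternatively, one can bypass the Tychonoff-ness of $B$ and prove properness of the $G/N$-action directly from that of the $G$-action via the net characterization of proper actions, lifting convergent nets through the open maps $q$ and $\mathrm{pr}\colon G\to G/N$ and applying properness of the $G$-action on $P$.) The one genuinely nontrivial step is the complete regularity of the orbit space $P/N$ in the second paragraph; everything else is formal verification.
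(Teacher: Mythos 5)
The paper offers no proof of this statement at all: it is imported verbatim from Palais \cite[Proposition~1.3.2]{P}, so there is no in-paper argument to compare yours against. Judged on its own terms, your reconstruction is essentially correct and sensibly organized: properness of the restricted $N$-action by restriction to the closed set $P\times N$, the intermediate bundle $(P,P/N,N,q)$ with its division map obtained by restricting $\delta$, freeness and the quotient property of $\overline{\pi}$, and the continuity of $\overline{\delta}$ via the open saturated restriction of $q\times q$ to $P_\pi$ together with Proposition~\ref{PtQ} are all fine. You also correctly isolate the one genuinely nontrivial ingredient, namely that the orbit space of a proper action on a Tychonoff space is Tychonoff; this is indeed in Palais (his Proposition~1.2.8), applied once to the $N$-action to get $P/N$ Tychonoff and once to the $G$-action to get $B$ Tychonoff, after which Theorem~\ref{Slice2} closes the loop on properness.

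Three caveats keep this from being fully self-contained. First, your Hausdorffness argument for $P/N$ uses ``proper $\Rightarrow$ closed image''; with the bare definition of proper as ``preimages of compacta are compact'' this needs the codomain $P\times P$ to be compactly generated, which a general Tychonoff space need not be, so you should either take proper to mean universally closed or again lean on Palais's formulation, under which closedness of orbits is built in. Second, your alternative ``self-contained'' route to complete regularity of $P/N$ (promoting a separating function to an $N$-invariant one) is only a slogan: for a general closed subgroup $N$ of a general topological group this requires some averaging or slice-type input and is exactly the content you are trying to avoid citing, so as written it does not substitute for the citation. Third, you never verify that the induced $G/N$-action on $P/N$ is continuous; this is easy (factor $P\times G\to P/N$ through the open quotient $q\times\mathrm{pr}$), but it is part of checking that $(P/N,B,G/N,\overline{\pi})$ is a Cartan principal bundle and should be said. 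With those points patched, the argument stands.
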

\begin{tw}[{\cite[Theorem 4.3.4]{P}}]\label{Slice3} A proper Cartan principal bundle $(P,B,G,\pi)$ with $G$ a Lie group and $P$ separable and metrizable admits an invariant metric. Hence, $P\slash G$ is metrizable. Moreover, $\dim_{[x]}(P\slash G)=\dim_x(P)-\dim(G)$.
\end{tw}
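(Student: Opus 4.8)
The plan is to establish the three assertions separately: the existence of a $G$-invariant metric on $P$, the metrizability of $P/G$ as a consequence of it, and finally the dimension formula. The common structural input is local triviality: since $P$ is metrizable it is Tychonoff, so by Theorem~\ref{Slice} there is an open cover $\{U_i\}$ of $B$ together with $G$-equivariant homeomorphisms $\varphi_i\colon\pi^{-1}(U_i)\to U_i\times G$, with $G$ acting on $U_i\times G$ by right translation in the second coordinate, and since $P$ is separable metrizable (hence Lindel\"of) we may take this cover countable. I also record two facts used throughout: $\pi$ is an open map (for open $O\subseteq P$ the set $\pi^{-1}(\pi(O))=\bigcup_{g\in G}gO$ is open and $\pi$ is a quotient map), so $B$ is homeomorphic to $P/G$ with the quotient topology; and $B$ is Lindel\"of and --- being the orbit space of a proper action of a Lie group on a metrizable space --- regular, hence normal.

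For the invariant metric I would first build local invariant metrics and then amalgamate them. Fix a bounded right-invariant metric $\rho$ on $G$ inducing its topology (take a right-invariant Riemannian metric on the Lie group $G$ and cap the associated distance at $1$), and on each $U_i$ a bounded metric $d^B_i$; transporting the product metric $(b,g),(b',g')\mapsto d^B_i(b,b')+\rho(g,g')$ along $\varphi_i$ gives a $G$-invariant metric $d_i$ on $\pi^{-1}(U_i)$ compatible with its topology. Using normality of $B$, choose a partition of unity $\{\psi_i\}$ on $B$ subordinate to $\{U_i\}$; then $\{\psi_i\circ\pi\}$ is a $G$-invariant partition of unity on $P$ subordinate to $\{\pi^{-1}(U_i)\}$. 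The remaining task is to manufacture a single $G$-invariant metric out of the $d_i$ and these data, and I would do this exactly as in the proof of the Nagata--Smirnov metrization theorem: shrink to a cover by $G$-invariant sets $\pi^{-1}(W_i)$ with $\overline{W_i}\subseteq U_i$, use cutoff functions pulled back from $B$ to extend each bounded $d_i$ to a bounded continuous $G$-invariant pseudometric $\widehat d_i$ on all of $P$, so that the countable family $\{\widehat d_i\}$ separates points of $P$ and generates its topology, and set $d=\sum_i2^{-i}\widehat d_i$. I expect this amalgamation to be the main obstacle: it is purely point-set-topological, but extending the local pseudometrics to globally defined ones while keeping both the triangle inequality and $G$-invariance needs the same care as the classical metrization theorems.

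Granting the invariant metric $d$, the metrizability of $P/G$ is formal. Define $\bar d$ on $P/G$ by $\bar d([p],[q])=\inf_{g\in G}d(gp,q)$; $G$-invariance of $d$ makes $\bar d$ symmetric and subadditive, and $\bar d([p],[q])=0$ forces $[p]=[q]$: if $g_np\to q$ then $K=\{g_np:n\in\mathbb N\}\cup\{q\}$ is compact, properness of the action makes $\{g\in G:gp\in K\}$ compact, and a convergent subsequence $g_{n_k}\to g_0$ gives $g_0p=q$ by continuity of the action. Hence $\bar d$ is a metric; since $\pi$ is open one checks directly that $\pi(B_d(p,r))$ equals the $\bar d$-ball of radius $r$ about $[p]$, so $\bar d$ induces the quotient topology and $P/G$ is metrizable.

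Finally, for the dimension identity, fix $x\in P$ and a trivializing neighbourhood $\pi^{-1}(U)\cong U\times G$ with $x$ corresponding to $(\pi(x),e)$. Every neighbourhood of $x$ in $P$ contains one of the form $W\times O$ with $W$ a neighbourhood of $\pi(x)$ in $U$ and $O$ a coordinate ball about $e$, so $O\cong\mathbb R^{\dim G}$. By the product theorem for the covering dimension of separable metric spaces --- which follows from $\dim(Y\times I)=\dim Y+1$ in \cite{E} together with the fact that $\mathbb R^{\dim G}$ is homeomorphic to the interior of a cube --- one has $\dim(W\times\mathbb R^{\dim G})=\dim(W)+\dim(G)$, so by monotonicity of dimension under subspaces $\dim_x(P)=\bigl(\inf_W\dim(W)\bigr)+\dim(G)$. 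Since $\pi$ is open the sets $W$ form a neighbourhood basis of $[x]$ in $P/G$, so $\inf_W\dim(W)=\dim_{[x]}(P/G)$, and therefore $\dim_x(P)=\dim_{[x]}(P/G)+\dim(G)$, which is the asserted formula (with the evident conventions if $\dim_x(P)=\infty$).
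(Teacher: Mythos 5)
A preliminary remark: the paper offers no proof of this statement at all --- it is imported wholesale from Palais \cite[Theorem~4.3.4]{P} and used as a black box --- so there is no internal argument to compare yours with; I can only judge it against Palais's own proof, which your outline in fact closely parallels (local triviality from the slice theorem, local invariant metrics glued by an invariant partition of unity, the infimum-over-orbits quotient metric). Your treatment of the second and third assertions is essentially complete and correct: the verification that $\bar d$ is a metric inducing the quotient topology is right (both the properness argument for point separation and the identification of $\bar d$-balls with $\pi$-images of $d$-balls), and the dimension computation goes through provided you explicitly invoke $\dim(X\times I)=\dim(X)+1$ for nonempty separable metric $X$ (Hurewicz--Morita, available in \cite{E} but not among the facts this paper quotes); that equality is genuinely needed, since the product theorem alone gives only ``$\leq$'' and equality fails for general factors.

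The gaps are both in the first assertion, and you flag only one of them. (i) You dispose of the normality of $B$ in a single clause, asserting that the orbit space of a proper action on a metrizable space is regular. That is true, but it is itself a substantive proposition in Palais's paper (complete regularity of orbit spaces of Cartan $G$-spaces), and it cannot be waved through here: metrizability of $B\cong P/G$ is the conclusion being proved, and while the local sections do show that $B$ is locally metrizable, Hausdorff and Lindel\"of, regularity does not follow formally from those properties --- the closure in $B$ of a set lying inside a metrizable chart need not stay inside the chart. Without regularity you get neither the partition of unity $\{\psi_i\}$ nor the shrinking $\overline{W_i}\subseteq U_i$, so the gluing step cannot start. (In the one place the present paper actually applies the theorem, $P=\mathcal{G}_x$ is a manifold, hence locally compact, and then $B$ is locally compact Hausdorff because $\pi$ is open, so regularity is immediate; but the statement as given assumes only separable metrizable $P$.) (ii) The amalgamation itself, which you do flag honestly: the standard cutoff device $\widehat d_i(p,q)=\min\bigl\{d_i(p,q),\,f_i(p)+f_i(q)\bigr\}$, with $f_i$ a $G$-invariant, $d_i$-Lipschitz function supported in $\pi^{-1}(U_i)$ and positive on $\pi^{-1}(W_i)$, does work, but it again consumes the normality of $B$ to produce the invariant $f_i$, and one must still check that $\sum_i 2^{-i}\widehat d_i$ induces the topology of $P$ rather than a coarser one. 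None of this is conceptually difficult, but it is exactly where the content of the ``invariant metric'' assertion lives, and as written the proposal defers it rather than carries it out.
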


\section{Hilbert's fifth problem for transitive groupoids}
\subsection{Some preliminary theorems}
This part is devoted to establishing two theorems which are crucial to the solution to Hilberts fifth problem for groupoids given in the next subsection. We feel that the results of this section should be known, however we were unable to find a suitable reference and so we provide their proofs for the readers convenience.
\begin{tw}\label{dim} Let $f\colon M_1\rightarrow M_2$ be a continuous bijection between separable metric spaces. Additionally let $M_1$ be locally compact of dimension $m$. Then $\dim(M_2)=m$.
\begin{proof} We first prove $\dim(M_2)\leq m$. Let us take around each point $x\in M_1$ its relatively compact open neighbourhood $U_x$. Since a separable metric space is Lindel\"{o}f we can choose a countable subcover $\{U_i\}_{i\in I}$ from $\{U_x\}_{x\in M_1}$. Now $f|_{\overline{U_i}}\colon \overline{U_i}\rightarrow f(\overline{U_i})$ is a homeomorphism since for $i\in I$ the set $\overline{U_i}$ is compact. Moreover, this implies that $f(\overline{U_i})$ are closed in $M_2$ of dimension at most $m$ (since homeomorphisms preserve dimension and $\dim(\overline{U_i})\leq m$ due to the fact that they are subspaces of an $m$ dimensional space). Hence the family $f(\overline{U_i})$ satisfy the conditions of Theorem \ref{DCS} which implies that $M_2$ has dimension no greater than $m$. On the other hand at least one of $\overline{U_i}$ has to have dimension $m$ since otherwise by Theorem \ref{DCS} we have $m=\dim(M_1)\leq m-1$. Hence, at least one of $f(\overline{U_i})$ is of dimension $m$ which implies that $\dim(M_2)\geq m$.
\end{proof}
\end{tw}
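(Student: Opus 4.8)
The plan is to bound $\dim(M_2)$ from above and from below separately, in both cases exploiting that a continuous bijection out of a compact space into a Hausdorff (here metric) space is automatically a homeomorphism, together with the behaviour of the covering dimension under countable closed unions (Theorem~\ref{DCS}) and under passage to subspaces.

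For the upper bound $\dim(M_2)\le m$ I would use local compactness of $M_1$ to pick, around every point, a relatively compact open neighbourhood; since $M_1$ is separable metric it is Lindel\"of, so a countable subfamily $\{U_i\}_{i\in\mathbb{N}}$ of these already covers $M_1$. Each closure $\overline{U_i}$ is compact, so the restriction of $f$ to $\overline{U_i}$ is a continuous injection from a compact space into the metric space $M_2$, hence a homeomorphism onto its image $f(\overline{U_i})$; moreover $f(\overline{U_i})$ is compact and therefore closed in $M_2$. As $\overline{U_i}$ is a subspace of the $m$-dimensional space $M_1$ we get $\dim(\overline{U_i})\le m$, and since dimension is a topological invariant, $\dim f(\overline{U_i})\le m$ as well. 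Because $f$ is surjective and $\bigcup_i\overline{U_i}=M_1$, we have $M_2=\bigcup_i f(\overline{U_i})$, a countable union of closed subspaces of dimension $\le m$, and Theorem~\ref{DCS} yields $\dim(M_2)\le m$.

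For the lower bound $\dim(M_2)\ge m$ I would argue that at least one $\overline{U_i}$ has dimension exactly $m$: otherwise all of them would satisfy $\dim(\overline{U_i})\le m-1$, and Theorem~\ref{DCS} applied to $M_1=\bigcup_i\overline{U_i}$ would give $\dim(M_1)\le m-1$, contradicting $\dim(M_1)=m$. For such an index $i$ the set $f(\overline{U_i})$ is homeomorphic to $\overline{U_i}$, hence of dimension $m$, and being a subspace of $M_2$ this forces $\dim(M_2)\ge m$ by monotonicity of dimension under subspaces. Combining the two inequalities gives $\dim(M_2)=m$.

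The only genuine obstacle is the reduction to compact pieces: without local compactness one cannot write $M_1$ as a countable union of compact sets on which the continuous bijection is guaranteed to be a homeomorphism, and it is exactly this local homeomorphism property — together with the fact that compact subsets of a metric space are closed, so that Theorem~\ref{DCS} is applicable on the target side — that drives both estimates. The remaining ingredients (the Lindel\"of property, and the subspace and countable-closed-sum theorems for covering dimension) are routine bookkeeping.
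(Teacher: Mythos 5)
Your proposal is correct and follows essentially the same argument as the paper: cover $M_1$ by relatively compact open sets, extract a countable subcover via Lindel\"of, use that $f$ restricted to each compact closure is a homeomorphism onto a closed subset of $M_2$, and apply the countable closed sum theorem together with subspace monotonicity for both inequalities. No substantive differences to report.
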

\begin{tw}\label{Prod1} Let $S\subset\mathbb{R}^m$ be such that for some open $V\subset\mathbb{R}^n$ the set $V\times S$ is homeomorphic to an $(n+m)$-dimensional manifold. Then $S$ is open in $\mathbb{R}^m$.\end{tw}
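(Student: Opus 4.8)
The plan is to reduce the statement to Brouwer's invariance of domain theorem. Being a topological manifold is a topological property, and the product topology on $V\times S$ coincides with the subspace topology it inherits from $\mathbb{R}^{n+m}\cong\mathbb{R}^{n}\times\mathbb{R}^{m}$. Hence the hypothesis says precisely that $V\times S$, viewed as a subspace of $\mathbb{R}^{n+m}$, is itself an $(n+m)$-dimensional topological manifold. We may assume throughout that $V$ and $S$ are non-empty (if $S=\emptyset$ there is nothing to prove). The whole statement will then follow from the assertion $(\star)$: \emph{a subspace $X\subset\mathbb{R}^{k}$ which, in its subspace topology, is a topological $k$-manifold is open in $\mathbb{R}^{k}$}, applied with $k=n+m$ and $X=V\times S$, together with a short slicing argument.

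To prove $(\star)$ I would fix $x\in X$, choose a chart at $x$ (an open neighbourhood $U$ of $x$ in $X$ together with a homeomorphism $\phi\colon U\to\phi(U)$ onto an open subset of $\mathbb{R}^{k}$) and observe that $\phi^{-1}\colon\phi(U)\to\mathbb{R}^{k}$ is then a continuous injection defined on an open subset of $\mathbb{R}^{k}$. By the invariance of domain theorem (in the spirit of Theorem~\ref{MH}) such a map is open, so $U=\phi^{-1}(\phi(U))$ is open in $\mathbb{R}^{k}$; since $x\in U\subset X$ and $x$ was arbitrary, $X$ is open. I would stress that this is the only non-formal step. It is also the place to be careful: the hypothesis ``manifold'' cannot be weakened, since the subspace $[0,1)\subset\mathbb{R}$ is locally compact, has local dimension $1$ at each of its points and is homeomorphic to a $1$-manifold with boundary, yet it is not open (and correspondingly $V\times[0,1)$ is not an $(n+1)$-manifold). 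Thus no argument resting solely on dimension theory, for instance via Theorems~\ref{DCS} and~\ref{dim}, can reach the conclusion, and the local Euclidean structure must be used in an essential way.

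Finally, with $(\star)$ established, $V\times S$ is open in $\mathbb{R}^{n}\times\mathbb{R}^{m}$. Given $s_{0}\in S$, fix any $v_{0}\in V$; then there is a basic open set $A\times B$, with $A\subset\mathbb{R}^{n}$ and $B\subset\mathbb{R}^{m}$ open, such that $(v_{0},s_{0})\in A\times B\subset V\times S$. Since $A\neq\emptyset$, for every $b\in B$ we have $(v_{0},b)\in A\times B\subset V\times S$, hence $b\in S$; therefore $B\subset S$ and $s_{0}\in\operatorname{int}_{\mathbb{R}^{m}}(S)$. As $s_{0}\in S$ was arbitrary, $S$ is open in $\mathbb{R}^{m}$, which completes the plan. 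The main obstacle is not this bookkeeping but recognising that the correct tool is invariance of domain and that the manifold hypothesis, rather than a mere dimension count, is what forces the conclusion.
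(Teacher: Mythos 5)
Your proof is correct and rests on the same key idea as the paper's: apply invariance of domain to a chart of $V\times S$ viewed inside $\mathbb{R}^{n+m}$ to conclude openness there, then project a basic product neighbourhood onto the $\mathbb{R}^m$ factor to get openness of $S$. The only differences are cosmetic — you argue directly via the general claim $(\star)$ where the paper argues by contradiction at a single bad point of $S$.
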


\begin{proof} Let us assume that $S$ is not open in $\mathbb{R}^m$. Then there exists a point $x\in S$ such that no neighbourhood of $x$ is contained in $S$. Let us take a neighbourhood $U_{(x,y)}$ of $(x,y)$ for some $y\in V$ which is homeomorphic to an open ball and let us denote by $i\colon S\times V\rightarrow \mathbb{R}^m\times\mathbb{R}^n$ the inclusion given by the inclusions of $S$ into $\mathbb{R}^m$ and $V$ into $\mathbb{R}^n$. Then by the invariance of domain theorem $i(U_{(x,y)})\subset S\times V\subset \mathbb{R}^m\times\mathbb{R}^n$ is open in $\mathbb{R}^m\times\mathbb{R}^n$. But this cannot be the case as it contains the point $(x,y)$ and it cannot contain any neighbourhood of $(x,y)$ from the product basis of $\mathbb{R}^m\times\mathbb{R}^n$ (since then $S$ would contain the projection of that set onto $\mathbb{R}^m$ which would be an open neighbourhood of $x$).
\end{proof}

\begin{rem} We would like to note that even though this result seems natural and is somewhat expected it is not trivial since there exists more than one topological space $X$ such that $X\times\mathbb{R}$ is homeomorphic to $\mathbb{R}^4$ (cf.~\cite{B}, where it is shown that $X$ doesn't even have to be a manifold).
\end{rem}
\begin{cor}\label{prod2} There is no $S\subset\mathbb{R}^k$ such that for some open $V\subset\mathbb{R}^n$ the set $V\times S$ is homeomorphic to an $(n+m)$-dimensional manifold for some $m>k$.\end{cor}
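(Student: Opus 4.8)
The plan is to deduce this immediately from Theorem~\ref{Prod1} after artificially enlarging the Euclidean space in which $S$ sits. Suppose, for contradiction, that such an $S\subset\mathbb{R}^k$ exists together with an open set $V\subset\mathbb{R}^n$ for which $V\times S$ is homeomorphic to an $(n+m)$-dimensional manifold with $m>k$. Since $m>k\geq 0$ we have $n+m\geq 1$, so this manifold — and hence $V\times S$ — is non-empty; in particular $S\neq\emptyset$.

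Next I would re-embed $S$ via the standard inclusion $\iota\colon\mathbb{R}^k\hookrightarrow\mathbb{R}^m$, $\iota(x_1,\dots,x_k)=(x_1,\dots,x_k,0,\dots,0)$, which is a homeomorphism onto $\mathbb{R}^k\times\{0\}^{m-k}$. Then $\operatorname{id}_V\times\iota$ is a homeomorphism from $V\times S$ onto $V\times\iota(S)$, so $V\times\iota(S)$ is still homeomorphic to an $(n+m)$-dimensional manifold, and now $\iota(S)$ is a subset of $\mathbb{R}^m$. Theorem~\ref{Prod1} applies directly to $\iota(S)\subset\mathbb{R}^m$ and shows that $\iota(S)$ is open in $\mathbb{R}^m$.

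To finish I would observe that $\iota(S)\subset\mathbb{R}^k\times\{0\}^{m-k}$, and since $m-k\geq 1$ this subset has empty interior in $\mathbb{R}^m$, so it contains no non-empty open subset of $\mathbb{R}^m$. Hence $\iota(S)=\emptyset$ and therefore $S=\emptyset$, contradicting $S\neq\emptyset$ established above.

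I do not expect any genuine obstacle here: all the real work is hidden in Theorem~\ref{Prod1} (and ultimately in invariance of domain). The only points requiring a moment's attention are that forming the product with $V$ commutes with re-embedding $S$, so that the homeomorphism type of $V\times S$ is unchanged under $\iota$, and that the degenerate possibility $S=\emptyset$ is excluded precisely by the dimension hypothesis $m>k\geq 0$.
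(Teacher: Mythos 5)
Your proposal is correct and is essentially the paper's own argument: the paper likewise applies Theorem~\ref{Prod1} to the re-embedded copy $\{0\}^{m-k}\times S\subset\mathbb{R}^m$ and notes that such a set cannot be open in $\mathbb{R}^m$. Your only addition is the explicit exclusion of the degenerate case $S=\emptyset$ (for which the embedded copy \emph{is} open), a point the paper leaves implicit.
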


\begin{proof} We apply the previous theorem to the set $\{0\}^{m-k}\times S\subset\mathbb{R}^m$ and note that this set cannot be open in $\mathbb{R}^m$.
\end{proof}

\subsection{The solution to Hilbert's fifth problem for transitive groupoids}
Let us state our main theorem:
\begin{tw} Let $\mathcal{G}$ be a transitive topological groupoid such that $\mathcal{G}_0$ is a smooth manifold, $\mathcal{G}_x$~and~$\mathcal{G}^x_x$ are topological manifolds. Then the map~$t_x$ is a quotient map.
\end{tw}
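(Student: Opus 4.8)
The plan is to reduce the statement to a manifold-recognition problem for the orbit space. The group $\mathcal{G}_x^x$ acts freely and continuously on $\mathcal{G}_x$ by composition, and its orbits are exactly the fibres of $t_x$; let $Q:=\mathcal{G}_x/\mathcal{G}_x^x$ carry the quotient topology, let $q\colon\mathcal{G}_x\to Q$ be the quotient map, and let $\overline{t_x}\colon Q\to\mathcal{G}_0$ be the continuous bijection induced by $t_x$ via Proposition~\ref{PtQ} (it is a bijection because $\mathcal{G}$ is transitive and the fibres of $t_x$ are precisely the orbits). Since $t_x=\overline{t_x}\circ q$ and $q$ is a quotient map, it suffices to prove that $\overline{t_x}$ is a homeomorphism. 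First I would record that $Q$ is separable, metrizable and locally compact Hausdorff: the action is by homeomorphisms, so $q$ is open; the orbit relation equals $(t_x\times t_x)^{-1}(\Delta_{\mathcal{G}_0})$ and is therefore closed because $\mathcal{G}_0$ is Hausdorff, whence $Q$ is Hausdorff; and an open continuous surjection transports second countability and local compactness from the manifold $\mathcal{G}_x$, after which local compactness together with second countability and Hausdorffness gives metrizability. The group $\mathcal{G}_x^x$, being a topological group which is a topological manifold, is a Lie group by the classical solution of Hilbert's fifth problem, and the division map of the action is a composite of the structure maps, hence continuous; thus $(\mathcal{G}_x,Q,\mathcal{G}_x^x,q)$ is a Cartan principal bundle. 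It is proper by Theorem~\ref{Slice2} (both $\mathcal{G}_x$ and $Q$ are Tychonoff) and locally trivial by Theorem~\ref{Slice}.

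Next I would pin down the dimensions; write $n=\dim\mathcal{G}_0$, $k=\dim\mathcal{G}_x^x$ and $d=\dim\mathcal{G}_x$. Theorem~\ref{Slice3} gives $\dim_{[g]}Q=d-k$ for every $[g]\in Q$. Covering $Q$ by countably many open sets of dimension $d-k$ (each point has such a neighbourhood since its local dimension is $d-k$), refining this to a countable cover by sets closed in $Q$ (open subsets of a metrizable space are $F_\sigma$), and applying Theorem~\ref{DCS} shows $\dim Q=d-k$. Since $\overline{t_x}$ is a continuous bijection from $Q$, which is separable metric and locally compact of dimension $d-k$, onto $\mathcal{G}_0$, Theorem~\ref{dim} forces $n=d-k$, i.e., $d=n+k$.

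Finally I would promote $Q$ to a topological manifold. Fix $[g_0]\in Q$ and pick, using local triviality, an open set $U\ni[g_0]$ with $q^{-1}(U)\cong U\times\mathcal{G}_x^x$; then choose an open $W$ with $[g_0]\in W$ and $\overline{W}\subset U$ compact. The restriction of $\overline{t_x}$ to the compact set $\overline{W}$ is an injective continuous map into the Hausdorff space $\mathcal{G}_0$, hence an embedding, so after shrinking $W$ further and composing with a chart of $\mathcal{G}_0$ we identify $W$ with a subspace $S\subset\mathbb{R}^n$. On the other hand $q^{-1}(W)\cong W\times\mathcal{G}_x^x$ is open in the $d$-manifold $\mathcal{G}_x$, hence a $d$-manifold, so choosing a nonempty open set $E\subset\mathcal{G}_x^x$ homeomorphic to an open set $B\subset\mathbb{R}^k$ we obtain that $S\times B\cong W\times E$ is an $(n+k)$-manifold. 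Theorem~\ref{Prod1} then applies and yields that $S$ is open in $\mathbb{R}^n$. Thus every point of $Q$ has a neighbourhood homeomorphic to an open subset of $\mathbb{R}^n$, so $Q$ — being Hausdorff and second countable — is a topological $n$-manifold. Consequently $\overline{t_x}\colon Q\to\mathcal{G}_0$ is a continuous bijection between topological manifolds, hence a homeomorphism by Theorem~\ref{MH}, and therefore $t_x=\overline{t_x}\circ q$ is a quotient map.

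The step I expect to be the main obstacle is the last one: turning the abstract quotient $Q$ into a manifold. The obvious local models for $Q$ are sections of the bundle, which sit inside $\mathcal{G}_x$ as $(d-k)$-dimensional subspaces of a $d$-manifold and therefore do not meet the hypotheses of Theorem~\ref{Prod1}; one must instead realise a local piece of $Q$ as a subspace of $\mathbb{R}^n$ through $\overline{t_x}$ and a chart of $\mathcal{G}_0$, which is exactly why the dimension identity $d=n+k$ has to be secured beforehand, via Theorem~\ref{Slice3} and Theorem~\ref{dim}. Everything else is routine point-set topology together with the slice theorems quoted above.
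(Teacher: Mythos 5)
Your proposal is correct and follows essentially the same route as the paper: form the orbit space $\mathcal{G}_x/\mathcal{G}_x^x$, show it is a separable metrizable locally compact Cartan principal bundle base (proper and locally trivial via Theorems~\ref{Slice2} and~\ref{Slice}), pin down $\dim\mathcal{G}_x=\dim\mathcal{G}_0+\dim\mathcal{G}_x^x$ via Theorems~\ref{Slice3} and~\ref{dim}, use local triviality together with Theorem~\ref{Prod1} to see the orbit space is locally Euclidean, and conclude with Theorem~\ref{MH}. The only differences are cosmetic (e.g., your Hausdorffness argument via the closed orbit relation, and your slightly more explicit upgrade of local to global dimension via Theorem~\ref{DCS}).
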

\begin{proof} To prove this let us consider the following diagram:
\begin{equation*}
\begin{tikzcd}
\mathcal{G}_x \arrow[rd, "\pi"] \arrow[r, "t_x"] & \mathcal{G}_0 \\
& \mathcal{G}_x\slash\mathcal{G}^x_x.\arrow[u, "\overline{t_x}"]
\end{tikzcd}
\end{equation*}
Note that since the fibres of $\pi$ and $t_x$ coincide, $\overline{t_x}$ is a~bijection. We shall prove that $\mathcal{G}_x\slash\mathcal{G}_x^x$ is a~topological manifold and this by Theorem~\ref{MH} will imply that $\overline{t_x}$ is a homeomorphism which in turn implies that $t_x$ is a quotient map. First of all we observe that $\mathcal{G}_x\slash\mathcal{G}_x^x$ is Hausdorff (given two points $y,z\in\mathcal{G}_x\slash\mathcal{G}_x^x$ they can be separated by the inverse images through $\overline{t_x}$ of the open sets separating $\overline{t_x}(y)$ and $\overline{t_x}(z)$). Furthermore, $\mathcal{G}_x\slash\mathcal{G}_x^x$ is locally compact since given a point \smash{$y\in\mathcal{G}_x\slash\mathcal{G}_x^x$} the image of a compact neighbourhood of some point in $\pi^{-1}(y)$ is a compact neighbourhood of $y$ (we use here the fact that~$\pi$ as a projection onto the orbit space of a group action is open). Local compactness and being Hausdorff imply that $\mathcal{G}_x\slash\mathcal{G}_x^x$ is Tychonoff. We also note that $\mathcal{G}_x\slash\mathcal{G}_x^x$ is second countable (the countable basis is given by the images of some countable basis of $\mathcal{G}_x$ through $\pi$). The group $G^x_x$ under our assumptions is a Lie group due to the solution to the classical Hilberts fifth problem. It is also apparent that $(\mathcal{G}_x,\mathcal{G}_x\slash\mathcal{G}_x^x,\mathcal{G}_x^x,\pi)$ is a Cartan principal bundle (since $\pi$ is a quotient map and $\delta(g,h)=h^{-1}g$ must be continuous since $\mathcal{G}_1$ is a~topological groupoid). Hence, by Theorems~\ref{Slice} and~\ref{Slice2} this bundle is proper and locally trivial. Theorem~\ref{Slice3} implies that $\mathcal{G}_x\slash\mathcal{G}_x^x$ is metrizable with $\dim(\mathcal{G}_x\slash\mathcal{G}_x^x)=\dim(\mathcal{G}_x)-\dim(\mathcal{G}_x^x)$ which in turn implies that $\dim(\mathcal{G}_0)= \dim(\mathcal{G}_x)-\dim(\mathcal{G}_x^x)$ (by Theorem~\ref{dim} and the fact that a~second countable metric space is separable).

We will now show that $\mathcal{G}_x\slash\mathcal{G}_x^x$ is locally Euclidean. Let us fix a point $y\in\mathcal{G}_x\slash\mathcal{G}_x^x$ along with its open neighbourhood $S$ such that the bundle $(\mathcal{G}_x,\mathcal{G}_x\slash\mathcal{G}_x^x,\mathcal{G}_x^x,\pi)$ is trivial when restricted to $S$. Using local compactness we can assume without loss of generality that $S$ is relatively compact. This allows us to treat $S$ as a subset of $\mathcal{G}_0$ since then $\overline{t_x}|_S\colon S\rightarrow\overline{t_x}(S)$ is a homeomorphism (since~$\overline{t_x}|_{\overline{S}}$ is a continuous bijection with compact domain). If $\overline{t_x}(S)$ is a neighbourhood of~$\overline{t_x}(y)$ in~$\mathcal{G}_0$ then we are done (by taking the preimage through $\overline{t_x}$ of a neighbourhood of $\overline{t_x}(y)$ homeomorphic to an open ball contained in~$\overline{t_x}(S)$). Let us consider the set $\overline{t_x}(S)\cap U$ for some neighbourhood~$U$ of~$\overline{t_x}(y)$ homeomorphic to an open ball (this set is now homeomorphic to a~subset of $\mathbb{R}^{\dim(\mathcal{G}_0)}$). Using the commutativity of the diagram above it is apparent that
\begin{gather*}
t_x|_{t_x^{-1}(\overline{t_x}(S)\cap U)}\colon \ t_x^{-1}(\overline{t_x}(S)\cap U)\rightarrow \overline{t_x}(S)\cap U
\end{gather*}
is locally trivial (i.e., $(t_x^{-1}(\overline{t_x}(S)\cap U),\overline{t_x}(S)\cap U,\mathcal{G}^x_x,t_x|_{t_x^{-1}(\overline{t_x}(S)\cap U)})$ is a principle bundle). Hence, for any neighbourhood $V$ of the identity in $\mathcal{G}^x_x$ we have that $(\overline{t_x}(S)\cap U)\times V$ is homeomorphic to an open subset in $\mathcal{G}_x$ and hence a manifold of the same dimension as $\mathcal{G}_x$. Now by using Theorem~\ref{Prod1} we get $\overline{t_x}(S)\cap U$ is open in $U$ and conclude that $\overline{t_x}(S)$ is indeed a neighbourhood of $\overline{t_x}(y)$.
\end{proof}

From this and Theorem~\ref{strong} it is visible that the assumption that $t_x$ is a quotient map is superfluous in Theorem \ref{SE} and we get the desired result:
\begin{tw}\label{H5S} Let $\mathcal{G}$ be a transitive topological groupoid with a smooth base $\mathcal{G}_0$ for which the space $\mathcal{G}_1$ is first-countable and the spaces $\mathcal{G}_x^x$, $\mathcal{G}_x$ are topological manifolds. Then $\mathcal{G}$ is continuously isomorphic to a unique Lie groupoid through a base preserving isomorphism.
\end{tw}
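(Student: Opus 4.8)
The plan is to obtain this statement as an immediate consequence of the theorem just proved together with Theorem~\ref{strong}. First I would observe that the hypotheses of the preceding theorem are exactly the ones assumed here: $\mathcal{G}$ is a transitive topological groupoid, $\mathcal{G}_0$ is a smooth manifold, and $\mathcal{G}_x$ and $\mathcal{G}^x_x$ are topological manifolds. Hence that theorem applies and tells us that $t_x$ is a quotient map.

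Next I would check that, equipped with this extra fact, $\mathcal{G}$ satisfies all the hypotheses of Theorem~\ref{strong}. The base $\mathcal{G}_0$ is a smooth manifold and $\mathcal{G}_1$ is first-countable by assumption. Since $\mathcal{G}^x_x$ is a topological manifold it is in particular locally Euclidean; and since $\mathcal{G}_x$ is a topological manifold, which by the standing convention of the paper means a second-countable Hausdorff space and is therefore metrizable, the source fiber $\mathcal{G}_x$ is Tychonoff. Combined with the fact that $t_x$ is a quotient map, just established, this is precisely the list of hypotheses of Theorem~\ref{strong}.

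Applying Theorem~\ref{strong} then yields a unique Lie groupoid continuously isomorphic to $\mathcal{G}$ through a base preserving isomorphism, which is the assertion. There is no genuine obstacle in this argument: all the substance lies in the preceding theorem (that $t_x$ is automatically a quotient map under these hypotheses), and the present statement merely records that the hypothesis ``$t_x$ is a quotient map'' may be removed from Theorem~\ref{SE}, and moreover that in doing so one can route the argument through Theorem~\ref{strong} rather than Theorem~\ref{SE}, thereby relaxing ``$\mathcal{G}_1$ is a topological manifold'' to ``$\mathcal{G}_1$ is first-countable''. The only point deserving a word of care is the step from ``$\mathcal{G}_x$ is a topological manifold'' to ``$\mathcal{G}_x$ is Tychonoff'', which is immediate from the metrizability convention.
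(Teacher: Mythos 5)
Your proposal is correct and follows exactly the route the paper intends: apply the immediately preceding theorem to conclude that $t_x$ is a quotient map, then verify (manifold $\Rightarrow$ metrizable $\Rightarrow$ Tychonoff for $\mathcal{G}_x$, manifold $\Rightarrow$ locally Euclidean for $\mathcal{G}_x^x$) that all hypotheses of Theorem~\ref{strong} hold. The paper leaves these verifications implicit, so your write-up is if anything slightly more complete than the original.
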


\begin{rem} In particular the previous theorem holds if $\mathcal{G}_1$ is a topological manifold.
\end{rem}
\begin{rem} We wish to note that if the Hilbert--Smith conjecture (cf.~\cite{TT}) proves valid then the assumption on $\mathcal{G}_x^x$ is unnecessary. However, using this method we see no way of weakening any more assumptions in order to get a result similar to Theorem~\ref{strong}.
\end{rem}

\section{Gleason--Yamabe theorem for proper transitive groupoids}
 Our goal in this section is to find an appropriate generalization of Theorem \ref{GY} to transitive groupoids. Let us first put our result in the most general way:

\begin{tw}[Gleason--Yamabe theorem for transitive groupoids]\label{GYS} Let $\mathcal{G}$ be a transitive topological groupoid with $\mathcal{G}_1$ first-countable, $\mathcal{G}_0$ a smooth manifold, $\mathcal{G}_x$ Hausdorff and locally compact, $\mathcal{G}^x_x$ compact and $t_x$ a quotient map. For every neighbourhood~$U$ of the identity subspace $\operatorname{Id}_{\mathcal{G}}$ and every point $x\in\mathcal{G}_0$ there exists a closed totally intransitive normal subgroupoid $\mathcal{K}$ such that:
\begin{enumerate}\itemsep=0pt
\item[$1)$] $\mathcal{K}^x_x$ is compact and contained in $U\cap\mathcal{G}^x_x$,
\item[$2)$] $\mathcal{G}\slash\mathcal{K}$ is a Lie groupoid.
\end{enumerate}
\end{tw}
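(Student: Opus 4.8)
The plan is to deduce the theorem from the classical Gleason--Yamabe theorem (Theorem~\ref{GY}): I will push the problem into the vertex bundle of $\mathcal{G}$, apply the classical result to the compact group $\mathcal{G}^x_x$, and then recognize $\mathcal{G}/\mathcal{K}$ as the gauge groupoid of a quotient Cartan principal bundle to which Theorem~\ref{H5S} applies.

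First I would set up the vertex bundle. Since $\mathcal{G}$ is transitive with $\mathcal{G}_1$ first-countable and $t_x$ a quotient map, Proposition~\ref{eq} shows $\mathcal{G}$ is principal; moreover $\mathcal{G}_x$ is locally compact Hausdorff, hence Tychonoff, $\mathcal{G}^x_x$ is a subspace of $\mathcal{G}_x$ (so Hausdorff) which is compact by hypothesis, and $\mathcal{G}_0$ is a manifold. Thus $P:=(\mathcal{G}_x,\mathcal{G}_0,\mathcal{G}^x_x,t_x)$ is a Cartan principal bundle, proper by Theorem~\ref{Slice2}, and Theorem~\ref{CPB} lets me identify $\mathcal{G}$ (base-preservingly, continuously) with the gauge groupoid $(\mathcal{G}_x\times\mathcal{G}_x)/\mathcal{G}^x_x$ carrying the quotient topology. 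After shrinking $U$, the set $U\cap\mathcal{G}^x_x$ is an open neighbourhood of the identity of $\mathcal{G}^x_x$, so Theorem~\ref{GY} provides a compact normal subgroup $K$ of $\mathcal{G}^x_x$ with $K\subset U\cap\mathcal{G}^x_x$ and $\mathcal{G}^x_x/K$ a Lie group. I then define $\mathcal{K}$ to be the wide subgroupoid with $\mathcal{K}^y_y:=hKh^{-1}$ for an arbitrary $h\colon x\to y$ (such $h$ exists by transitivity); normality of $K$ makes this independent of $h$, and routine checks show $\mathcal{K}$ is a totally intransitive normal subgroupoid with $\mathcal{K}^x_x=K$, which is assertion~(1). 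For closedness of $\mathcal{K}_1$ in $\mathcal{G}_1$: under the identification above the preimage of $\mathcal{K}_1$ in $\mathcal{G}_x\times\mathcal{G}_x$ is $\delta^{-1}(K)$ for the continuous division map $\delta$ of $P$, which is closed (as $K$ is compact and $\mathcal{G}^x_x$ Hausdorff) and saturated for the diagonal $\mathcal{G}^x_x$-action (as $K$ is normal), so its image in $\mathcal{G}_1$ is closed.

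Next I would pass to the quotient bundle $P':=(\mathcal{G}_x/K,\mathcal{G}_0,\mathcal{G}^x_x/K,\overline{t_x})$, which by Theorem~\ref{Slice4} is again a proper Cartan principal bundle with $\mathcal{G}_x/K$ Tychonoff. The three quotient maps assemble into a base-preserving morphism of Cartan principal bundles $P\to P'$, inducing a continuous morphism $q^{*}\colon\mathcal{G}\to\mathrm{Gau}(P')$ of the associated gauge groupoids; on underlying sets its fibres on morphisms are exactly the $\mathcal{K}$-equivalence classes, so $q^{*}$ is precisely the set-level projection defining $\mathcal{G}/\mathcal{K}$. The decisive point is that $q^{*}$ is in fact a quotient map of topological spaces: compactness of $K$ makes the orbit map $\mathcal{G}_x\to\mathcal{G}_x/K$ proper, hence $\mathcal{G}_x\times\mathcal{G}_x\to(\mathcal{G}_x/K)\times(\mathcal{G}_x/K)$ is proper and a fortiori a quotient map; composing it with the quotient projection onto $\mathrm{Gau}(P')_1$, and observing that this composite also factors through the quotient map $\mathcal{G}_x\times\mathcal{G}_x\to\mathcal{G}_1$, shows $q^{*}$ is a quotient map. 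Consequently $\mathrm{Gau}(P')$ with its quotient topology, together with $q^{*}$, satisfies the universal property characterizing topological quotient groupoids (continuity of the induced morphisms follows from Proposition~\ref{PtQ}), so the uniqueness statement of~\cite{BH} yields a base-preserving continuous isomorphism $\mathcal{G}/\mathcal{K}\cong\mathrm{Gau}(P')$. This is the step I expect to be the main obstacle: a priori the \cite{BH}-topology on $\mathcal{G}/\mathcal{K}$ is not the quotient topology of $\mathcal{G}_1/{\sim}$ (the ``minor difficulty'' flagged in Section~2), so Theorem~\ref{CPB} is not directly available, and the whole point is that compactness of $K$ is what keeps $q^{*}$ a quotient map.

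Finally I would identify $\mathrm{Gau}(P')$ as a Lie groupoid. Since $\mathcal{G}^x_x/K$ is a compact Lie group and $\mathcal{G}_x/K$ is Tychonoff, Theorem~\ref{Slice} makes $P'$ a locally trivial principal bundle over the second-countable manifold $\mathcal{G}_0$; covering $\mathcal{G}_0$ by countably many trivializing open sets then shows $\mathcal{G}_x/K$ is a second-countable Hausdorff topological manifold, so $\mathrm{Gau}(P')_1$ is locally of the form $U_i\times(\mathcal{G}^x_x/K)\times U_j$ and in particular first-countable, the vertex groups of $\mathrm{Gau}(P')$ are $\mathcal{G}^x_x/K$, and its source fibres are copies of $\mathcal{G}_x/K$. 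Thus $\mathrm{Gau}(P')\cong\mathcal{G}/\mathcal{K}$ satisfies the hypotheses of Theorem~\ref{H5S} and is therefore continuously isomorphic to --- hence carries the structure of --- a Lie groupoid, which is assertion~(2).
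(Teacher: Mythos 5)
Your proposal is correct and follows the same overall strategy as the paper's proof: apply the classical Gleason--Yamabe theorem to the compact vertex group $\mathcal{G}^x_x$, define $\mathcal{K}$ by conjugating the resulting $K$, pass to the quotient Cartan principal bundle via Theorem~\ref{Slice4}, identify its gauge groupoid with $\mathcal{G}\slash\mathcal{K}$, and feed the result into one of the earlier Hilbert's fifth problem theorems. You diverge at the two technical cruxes, in both cases defensibly. For the identification of $\mathcal{G}\slash\mathcal{K}$ with the gauge groupoid (the step where the Brown--Hardy topology could differ from the quotient topology), the paper sidesteps your properness computation entirely: it applies Theorem~\ref{AFPG} twice, once to the set-level morphism $\phi\colon\mathcal{G}\rightarrow\mathcal{H}$, whose restriction $\phi_x\colon\mathcal{G}_x\rightarrow\mathcal{G}_x\slash K$ is the orbit map and hence continuous, and once to $\overline{\phi}^{-1}$, whose restriction to a source fibre is continuous by the universal property of the quotient map $\phi_x$; continuity on a single source fibre then propagates to the whole (principal) groupoid. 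Your route --- showing that $q^{*}$ is itself a topological quotient map because the $K$-orbit map is perfect, and then matching universal properties --- is heavier but makes explicit exactly where compactness of $K$ is used, and it is sound. For the endgame, the paper verifies the hypotheses of Theorem~\ref{strong} (Tychonoff source fibres from Theorem~\ref{Slice4}, locally Euclidean vertex group since $\mathcal{G}^x_x\slash K$ is a Lie group, $t'_x$ a quotient map since $\mathcal{H}$ is principal), whereas you invoke local triviality from Theorem~\ref{Slice} to upgrade $\mathcal{G}_x\slash K$ to a topological manifold and then apply Theorem~\ref{H5S}; the paper's choice gets by without appealing to the slice theorem at this stage. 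Finally, you prove closedness of $\mathcal{K}$ directly via the saturated closed set $\delta^{-1}(K)$, while the paper obtains it for free as $\pi^{-1}(\operatorname{Id}_{\mathcal{H}})$ once the continuous isomorphism is established.
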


\begin{proof} First let us note that for a given neighbourhood $U$ of the identity space $\operatorname{Id}_{\mathcal{G}}$ and a fixed point $x\in\mathcal{G}_0$ by Theorem \ref{GY} there is a compact normal subgroup $K\subset\mathcal{G}^x_x$ such that $K$ is contained in $U$ and $\mathcal{G}^x_x\slash K$ is a Lie group. We use this group to create the desired groupoid $\mathcal{K}$. We put $\mathcal{K}^x_x=K$ and $\mathcal{K}^y_y=gKg^{-1}$ for some morphism $g\colon x\rightarrow y$. The above construction does not depend on the choice of the morphism $g$ since given another morphism $\tilde{g}\colon x\rightarrow y$ there exists an element $h\in\mathcal{G}^x_x$ such that $g=\tilde{g}h$ (namely, $h=\tilde{g}^{-1}g$). Then
\begin{gather*}
gKg^{-1}=\tilde{g}hKh^{-1}\tilde{g}^{-1}=\tilde{g}K\tilde{g}^{-1},
\end{gather*}
$\mathcal{K}$ is a topological groupoid with the induced topology. It is obvious from the construction that~$\mathcal{K}$ is also a normal subgroupoid.

Note that $\mathcal{G}_x$ is Tychonoff since it is locally compact and Hausdorff. By Proposition \ref{eq} we conclude that $\mathcal{G}_1$ is a principal groupoid and so $(\mathcal{G}_x,\mathcal{G}_0,\mathcal{G}^x_x,t_x)$ is a Cartan principal bundle. Moreover, it is a proper Cartan principal bundle due to Theorem~\ref{Slice2} and so by Theorem~\ref{Slice4} the quadruple $(\mathcal{G}_x\slash\mathcal{K}^x_x,\mathcal{G}_0,\mathcal{G}^x_x\slash\mathcal{K}^x_x,\overline{t_x})$ is also a proper Cartan principal bundle and so we can consider its gauge groupoid $\mathcal{H}$. We will show that $\mathcal{H}$ is continuously isomorphic to $\mathcal{G}\slash\mathcal{K}$.

Since $\mathcal{H}$ and $\mathcal{G}\slash\mathcal{K}$ are isomorphic (not necessarily in a continuous manner) by a base preserving isomorphism to $\mathcal{G}_0\times(\mathcal{G}^x_x\slash\mathcal{K}^x_x)\times\mathcal{G}_0$ and $\mathcal{G}$ is isomorphic (not necessarily in a continuous manner) by a base preserving isomorphism to $\mathcal{G}_0\times\mathcal{G}^x_x\times\mathcal{G}_0$ we have a morphism of groupoids $\phi\colon \mathcal{G}\rightarrow\mathcal{H}$ (the quotient morphism). We note that $\phi_x\colon \mathcal{G}_x\rightarrow\mathcal{G}_x\slash\mathcal{K}^x_x$ is then equal to the quotient map and hence continuous. This combined with Theorem~\ref{AFPG} implies that $\phi$ is in fact a continuous groupoid morphism. By the universal property of quotient groupoids we have that $\overline{\phi}\colon \mathcal{G}\slash\mathcal{K}\rightarrow\mathcal{H}$ is continuous.

On the other hand, let us observe that by the universal property of the quotient map the map $\overline{\phi_x}^{-1}\colon \mathcal{G}_x\slash\mathcal{K}_x^x\rightarrow (\mathcal{G}\slash\mathcal{K})_x$ is also continuous since it is equal to the map $\overline{\pi}_x$ induced by the groupoid projection map $\pi\colon \mathcal{G}_1\rightarrow(\mathcal{G}\slash\mathcal{K})_1$ restricted to $\mathcal{G}_x$: 
\begin{equation*}
\begin{tikzcd}
\mathcal{G}\arrow[rd, "\pi"] \arrow[r, "\phi"] & \mathcal{H} \\
& \mathcal{G}\slash\mathcal{K},\arrow[u, "\overline{\phi}"]
\end{tikzcd}
\quad
\begin{tikzcd}
\mathcal{G}_x\arrow[rd, "\pi_x"] \arrow[r, "\phi_x"] & \mathcal{G}_x\slash\mathcal{K}_x^x \\
& (\mathcal{G}\slash\mathcal{K})_x.\arrow[u, "\overline{\phi_x}"]
\end{tikzcd}
\end{equation*}
This again by Theorem \ref{AFPG} implies that $\overline{\phi}^{-1}$ is continuous and hence $\mathcal{H}$ and $\mathcal{G}\slash\mathcal{K}$ are conti\-nuous\-ly isomorphic.

We now show that $\mathcal{H}$ satisfies all the assumptions of Theorem~\ref{strong} and so it is a Lie groupoid and consequently $\mathcal{G}\slash\mathcal{K}$ is also a Lie groupoid. The fact that $\mathcal{H}$ is principal (as a gauge groupoid of a Cartan principal bundle) implies that~$t'_x$ is a quotient map (where $t'$ denotes the target map in $\mathcal{H}$). By Theorem~\ref{Slice4} we have $\mathcal{H}_x=\mathcal{G}_x\slash\mathcal{K}_x^x$ is Tychonoff. It is also locally compact as an image of a locally compact space through a continuous open map. As a quotient of a locally compact space by a group action it is locally compact and first countable (since the projection onto the orbit space is open and~$\mathcal{G}_x$ is locally compact and first countable). This also implies that $\mathcal{H}_1:=(\mathcal{H}_x\times\mathcal{H}_x)\slash\mathcal{H}^x_x$ is first countable. Finally, we note that $\mathcal{H}_0=\mathcal{G}_0$ is a smooth manifold.

We end the proof by noting that $\mathcal{K}$ is closed in $\mathcal{G}$ as it is equal to $\pi^{-1}(\operatorname{Id}_{\mathcal{H}})$.
\end{proof}

We also note that under such general assumptions we cannot demand that $\mathcal{K}$ is either compact or contained in $U$ as is shown by the following simple example:
\begin{egg}
Let us consider the trivial groupoid $\mathcal{G}:=\mathbb{R}\times\mathbb{Z}_p\times\mathbb{R}$, where $\mathbb{Z}_p$ denotes the additive group of $p$-adic integers. Let us now take the open set:
\begin{gather*}
 U:=\bigcup\limits_{n\in\mathbb{N}} (-n,n)\times B_p\left(0,\frac{1}{p^n}\right)\times (-n,n),
\end{gather*}
where $ B_p(0,r)$ denotes the $p$-adic ball of radius $r$ centered at zero. It is apparent that normal totally intransitive subgroupoids of~$\mathcal{G}$ are of the form $\bigcup\limits_{x\in\mathbb{R}} \{x\}\times N\times \{x\}$ for some subgroup~$N$ of~$\mathbb{Z}_p$ and that such a subgroupoid can be contained in~$U$ only if~$N$ is trivial. The above example also highlights the fact that the groupoid~$\mathcal{K}$ depends on the choice of~$x$.
\end{egg}

It is also noteworthy that the $\mathcal{G}_0$ has to be smooth for the theorem to work. The following counterexample shows that assuming that $\mathcal{G}_0$ is a topological manifold is insufficient:
\begin{egg} Let $\mathcal{G}_0$ be a topological manifold which does not admit any smooth structure (e.g., the celebrated $E_8$ 4-manifold). We then take $\mathcal{G}$ to be the pair groupoid over~$\mathcal{G}_0$. It is apparent that despite satisfying the assumptions of Theorem~\ref{GYS} (except for the smoothness of~$\mathcal{G}_0$) the thesis does not hold for this groupoid (since the identity subspace is the only totally intransitive groupoid).
\end{egg}

Theorem \ref{GYS} leads us to the following two corollaries:
\begin{tw}[Gleason--Yamabe theorem for proper transitive groupoids] Let $\mathcal{G}$ be a proper transitive topological groupoid with $\mathcal{G}_1$ first-countable, $\mathcal{G}_0$ a smooth manifold, $\mathcal{G}_x$ Hausdorff and locally compact. For every neighbourhood~$U$ of the identity subspace $\operatorname{Id}_{\mathcal{G}}$ and every point $x\in\mathcal{G}_0$ there exists a closed totally intransitive normal subgroupoid $\mathcal{K}$ such that:
\begin{enumerate}\itemsep=0pt
\item[$1)$] $\mathcal{K}^x_x$ is compact and contained in $U\cap\mathcal{G}^x_x$,
\item[$2)$] $\mathcal{G}\slash\mathcal{K}$ is a Lie groupoid.
\end{enumerate}
\end{tw}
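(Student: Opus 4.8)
The plan is to derive this corollary directly from Theorem~\ref{GYS}. Every hypothesis of that theorem except two is literally among the present hypotheses, so it suffices to show that the remaining two --- compactness of $\mathcal{G}^x_x$ and the requirement that $t_x$ be a quotient map --- follow automatically from properness together with $\mathcal{G}_0$ being a smooth manifold and $\mathcal{G}_1$ being first-countable.

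First I would observe that $\mathcal{G}^x_x$ is compact: by definition of properness the map $(s,t)\colon\mathcal{G}_1\to\mathcal{G}_0\times\mathcal{G}_0$ is proper, and $\mathcal{G}^x_x=(s,t)^{-1}(x,x)$ is the preimage of a single point under a proper map, hence compact.

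Next I would show that $t_x$ is a quotient map. Since $\mathcal{G}_0$ is a manifold, $\mathcal{G}_0\times\mathcal{G}_0$ is locally compact and Hausdorff, and a proper continuous map into a locally compact Hausdorff space is closed; hence $(s,t)$ is a closed map, and as $\mathcal{G}$ is transitive it is also surjective, so it is a quotient map. Now Proposition~\ref{eq} applies (its hypothesis that $\mathcal{G}_1$ is first-countable being in force) and, by its final clause, $t_x$ is a quotient map.

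Having verified the two missing hypotheses, I would conclude by invoking Theorem~\ref{GYS} for the same neighbourhood $U$ and point $x$, which produces a closed totally intransitive normal subgroupoid $\mathcal{K}$ with $\mathcal{K}^x_x$ compact and contained in $U\cap\mathcal{G}^x_x$ and with $\mathcal{G}\slash\mathcal{K}$ a Lie groupoid. The argument is essentially immediate once Theorem~\ref{GYS} is in hand; the only step deserving care is the ``proper implies closed'' observation over the locally compact Hausdorff base, which is precisely the place where the manifold structure (really just local compactness and Hausdorffness) of $\mathcal{G}_0$ is used.
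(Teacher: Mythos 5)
Your proposal is correct and follows essentially the same route as the paper's own proof: compactness of $\mathcal{G}^x_x$ as the fibre of the proper map $(s,t)$ over $(x,x)$, then properness implies $(s,t)$ is closed and hence a quotient map, so Proposition~\ref{eq} yields that $t_x$ is a quotient map, and Theorem~\ref{GYS} finishes the argument. Your extra care about why proper implies closed (local compactness and Hausdorffness of $\mathcal{G}_0\times\mathcal{G}_0$) and the surjectivity of $(s,t)$ from transitivity only makes explicit what the paper leaves implicit.
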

\begin{proof} Since $\mathcal{G}^x_x=(s,t)^{-1}(x,x)$ it is compact. Moreover, since $(s,t)$ is proper it is also closed and hence it is a quotient map. This by Proposition \ref{eq} gives us that $t_x$ is a quotient map as well, which in turn together with Theorem \ref{GYS} proves the desired result.
\end{proof}

\begin{tw}[Gleason--Yamabe theorem for transitive groupoids with compact source fibres] Let $\mathcal{G}$ be a transitive topological groupoid with $\mathcal{G}_1$ first-countable, $\mathcal{G}_0$ a smooth manifold, $\mathcal{G}_x$ Hausdorff and compact. For every neighbourhood $U$ of the identity subspace $\operatorname{Id}_{\mathcal{G}}$ and every point $x\in\mathcal{G}_0$ there exists a closed totally intransitive normal subgroupoid $\mathcal{K}$ such that:
\begin{enumerate}\itemsep=0pt
\item[$1)$] $\mathcal{K}^x_x$ is compact and contained in $U\cap\mathcal{G}^x_x$,
\item[$2)$] $\mathcal{G}\slash\mathcal{K}$ is a Lie groupoid.
\end{enumerate}
\end{tw}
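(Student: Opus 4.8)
The plan is to derive this statement as an immediate corollary of Theorem~\ref{GYS}, in exactly the same spirit as the preceding corollary handled the proper case. Thus the whole task reduces to checking that the hypotheses of Theorem~\ref{GYS} are satisfied here: that $\mathcal{G}_1$ is first-countable and $\mathcal{G}_0$ a smooth manifold are assumed verbatim; $\mathcal{G}_x$ is assumed Hausdorff, and it is locally compact because it is compact; so the only two remaining points are that $\mathcal{G}^x_x$ is compact and that $t_x$ is a quotient map.

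For compactness of $\mathcal{G}^x_x$: since $\mathcal{G}_0$ is a (Hausdorff) manifold, the singleton $\{x\}$ is closed, and $t_x\colon\mathcal{G}_x\rightarrow\mathcal{G}_0$ is continuous, so $\mathcal{G}^x_x=t_x^{-1}(x)$ is closed in $\mathcal{G}_x$; being a closed subspace of the compact space $\mathcal{G}_x$, it is compact. For the quotient map property: $t_x$ is surjective by transitivity of $\mathcal{G}$ (every $y\in\mathcal{G}_0$ is the target of some morphism with source $x$), and a continuous surjection from a compact space onto a Hausdorff space is a closed map (it carries closed, hence compact, sets to compact, hence closed, subsets of the Hausdorff target), and a closed continuous surjection is a quotient map. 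Hence $t_x$ is a quotient map. One could alternatively route this through Proposition~\ref{eq} by first showing $(s,t)$ is a quotient map, but the direct argument above is cleaner in the compact-fibre setting.

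Having verified all hypotheses of Theorem~\ref{GYS}, we invoke it: for the given neighbourhood $U$ of $\operatorname{Id}_{\mathcal{G}}$ and the given $x\in\mathcal{G}_0$ it produces a closed totally intransitive normal subgroupoid $\mathcal{K}$ with $\mathcal{K}^x_x$ compact and contained in $U\cap\mathcal{G}^x_x$ and with $\mathcal{G}\slash\mathcal{K}$ a Lie groupoid, which is precisely the assertion. I do not expect any real obstacle here; the one step meriting a line of care is the observation that compactness of $\mathcal{G}_x$ together with Hausdorffness of $\mathcal{G}_0$ forces $t_x$ to be closed and therefore a quotient map, after which everything is a direct unwinding of Theorem~\ref{GYS}.
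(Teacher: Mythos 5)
Your proposal is correct and matches the paper's own proof: both deduce compactness of $\mathcal{G}^x_x$ as a closed subspace of the compact $\mathcal{G}_x$, observe that $t_x$ is closed (hence a quotient map) because its domain is compact, and then apply Theorem~\ref{GYS}. Your write-up merely spells out the Hausdorffness of the target and the surjectivity of $t_x$ a bit more explicitly.
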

\begin{proof} $\mathcal{G}^x_x$ is compact as a closed subspace of the compact space $\mathcal{G}_x$. Moreover $t_x$ is a quotient map since it is closed as a map with compact domain. We finish the proof by applying Theorem~\ref{GYS}.
\end{proof}

\pdfbookmark[1]{References}{ref}
\LastPageEnding

\end{document}